\newcommand{\mc}{\mathcal}
\newcommand{\msf}{\mathsf}
\newcommand{\mbf}{\bm} 			
\newcommand{\N}{ \ensuremath{\mathbb{N}}}  
\newcommand{\R}{ \ensuremath{\mathbb{R}}}  
\newcommand{\C}{ \ensuremath{\mathbb{C}}}  
\newcommand{\eps}{\epsilon}
\providecommand\given{} 
\newcommand\SetSymbol[1][]{
   \nonscript\,#1\vert \allowbreak \nonscript\,\mathopen{}}
\DeclarePairedDelimiterX\set[1]{\lbrace}{\rbrace}{ \renewcommand\given{\SetSymbol[\delimsize]} #1 }  
\DeclarePairedDelimiterX\norm[1]{\lVert}{\rVert}{#1}  			
\DeclarePairedDelimiterX\inner[2]{\langle}{\rangle}{#1 \,,\, #2}  	
\DeclareMathOperator{\diag}{diag}								
\DeclarePairedDelimiterX\abs[1]{\lvert}{\rvert}{#1} 
\theoremstyle{plain}
  \newtheorem{theorem}{Theorem}[section]
  \newtheorem{lemma}[theorem]{Lemma}
  \newtheorem{corollary}[theorem]{Corollary}
  \newtheorem{condition}[theorem]{Condition}
\theoremstyle{remark}
  \newtheorem*{remark}{Remark}
\theoremstyle{definition}
  \newtheorem{example}[theorem]{Example}
  \newtheorem{definition}[theorem]{Definition}
\begin{document}
\title{Koopman Spectrum for Cascaded Systems}
\author{Ryan Mohr\thanks{Ryan Mohr (corresponding author). AIMdyn, Inc., Santa Barbara, CA 93101. Email: \texttt{mohrr@aimdyn.com}}~ and Igor Mezi\'{c}\thanks{Igor Mezi\'{c}. AIMdyn, Inc., Santa Barbara, CA 93101. Department of Mechanical Engineering, University of California, Santa Barbara, CA 93106.}}
\date{}

\maketitle

\begin{abstract}
This paper considers the evolution of Koopman principal eigenfunctions of cascaded dynamical systems. If each component subsystem is asymptotically stable, the matrix norms of the linear parts of the component subsystems are strictly increasing, and the component subsystems have disjoint spectrums, there exist perturbation functions for the initial conditions of each component subsystem such that the orbits of the cascaded system and the decoupled component subsystems have zero asymptotic relative error. This implies that the evolutions are asymptotically equivalent; cascaded compositions of stable systems are stable. These results hold for both cascaded systems with linear component subsystem dynamics and linear coupling terms and nonlinear cascades topologically conjugate to the linear case. We further show that the Koopman principal eigenvalues of each component subsystem are also Koopman eigenvalues of the cascaded system. The corresponding Koopman eigenfunctions of the cascaded system are formed by extending the domain of definition of the component systems' principal eigenfunctions and then composing them with the perturbation function.
\end{abstract}

\tableofcontents

\section{Introduction and terminology}
Engineered systems are becoming larger and more complex. Additionally, new tools allow us to measure the detailed behavior of increasingly complicated systems, such as biological-chemical networks. This growing complexity presents challenges to traditional tools of modeling and analysis, significantly slowing insight into the behavior of these systems. Predicting the behavior of these systems is hard or insoluble with these tools. For example, any chaotic behavior enforces limits on the possible prediction of the behavior of individual trajectories. This often requires the analysis of statistical properties of ensembles of trajectories \cite{Lasota:1994vt}. Unfortunately, the numerical computation of trajectories from initial conditions can be intractable due to the memory requirements needed to represent the system. New tools for the modeling, analysis, and prediction of complex systems must be developed.

The Koopman (composition) operator associated with a dynamical system is one such tool that has proven useful in analyzing complex systems (see \cite{Mezic:2000tm,Mezic:2004is,Mezic:2005ug} and \cite{Budisic:2012cf} and its references). The Koopman operator is an infinite dimensional linear operator that can fully capture the nonlinear behavior of a dynamical system. Much of the analysis of the system is done by spectrally decomposing observables on the system of interest into eigenvalues, eigenfunctions, and modes (called Koopman modes) of the Koopman operator \cite{Mezic:2004is,Budisic:2012cf}. Mathematically, the Koopman modes are the vector-valued coefficients in the expansion of a vector-valued observable into the (scalar-valued) eigenfunctions of the operator. Physically, the Koopman modes correspond to structures that are observable in an experiment. This has been used with great success in areas such as fluids dynamics (see \cite{Mezic:2013ei} and the references therein). A strength in using this operator is that its spectral decomposition can be approximately computed using data (numerical or experimental), without access to equations of motion for the system \cite{Budisic:2012cf,Williams:2015kh}. Recently, Koopman operator techniques have been extended to control-related problems such as system identification, observer design, and stability analysis \cite{Mezic:2015bl,Mauroy:2014ul,Sootla:bz,Mauroy:fh,Mauroy:cf}

However, to prove concrete theorems on the correspondence between the behavior of the Koopman operator and the behavior of the underlying system, some structure to that system must be assumed. Many engineered and natural systems exhibit a modular type structure where simple functional units are composed into more complex systems \cite{Callier:1976ky,Michel:1983dc,Pichai:1983be,Mezic:2004ur,Lan:2011hk,ShenOrr:2002jo,Mesbahi:2015es}. These systems can often be organized into a forward production unit with slower feedback loops modulating the forward unit's behavior. The forward unit displays a cascade-type structure. Modules farther downstream (modulo the slow feedback loops) do not influence the behavior of modules upstream to themselves. Given a model of a system, decomposition into modular components can be identified using various proposed techniques \cite{Pichai:1983be,Mezic:2004ur,Banaszuk:2011fy,Mesbahi:2015es}.

In all of these examples, there is a network structure where simple components are wired together to produce more complex systems. The procedure of wiring simpler dynamical systems together into more complicated systems can be described in language of the operad of wiring diagrams, where component systems are treated as black boxes, with manifold-typed input/output ports \cite{Vagner:2015tm,DeVille:2015hv}. A wiring diagram then specifies how boxes are wired together to form new boxes. Finally, an open dynamical system can be assigned to each box.

In this paper, we are interested in the behavior of the principal Koopman eigenfunctions corresponding to the forward production unit. In general, we consider a dynamical system constructed via an operadic-type wiring of component dynamical systems. Specifically, we look at dynamical systems with a cascade-type structure --- subsystem $i$ is unaffected by the dynamics of any subsystem $j$ if $j > i$ (see Fig. \ref{fig:cascade-system}). The resulting system has a lower block triangular wiring structure. The goal of this paper is to understand how the Koopman eigenfunctions of the component subsystems in a cascaded system behave under the action of the Koopman operator associated with the full system. In other words, how similar are the evolutions of the principal eigenfunctions when evolved with the cascaded system's dynamics and when evolved by just the component system dynamics? Additionally, what is the relation of the Koopman principal eigenvalues of the component subsystems to the Koopman spectrum of the cascaded system?

We show that for cascades with stable component subsystems whose eigenvalues satisfy a non-resonance condition between component subsystems and whose upstream systems are faster than the downstream systems that the Koopman principal eigenvalues of the component subsystems are also Koopman eigenvalues of the cascaded system. Furthermore, the associated Koopman eigenfunctions of the cascaded system are given by composing the principal eigenfunctions of the component subsystem with a perturbation function (that we give explicitly) that maps initial conditions to initial conditions. These results follow from a stronger one that we prove; the dynamics of the full system and the dynamics of the decoupled system converge exponentially faster to each other than the decoupled component system converges to its fixed point. In this case we say that the system has zero asymptotic relative error. These results allow one to analyze in some detail the behavior of a large cascaded system, without actually having to simulate it.

The rest of this section is devoted to precisely defining the cascaded systems and their associated nominal system (sec.\ \ref{subsec:cascaded-systems}); solution operators for the systems (sec.\ \ref{subsec:solution-operator}); the concepts of asymptotic proportionality, asymptotic equivalence, and zero asymptotic relative error (sec.\ \ref{subsec:asymptotic-equivalence}); and defining the Koopman operators and principal eigenfunctions of the component subsystems and extending their definitions to the full cascaded system using a tensor product construction (sec.\ \ref{subsec:koopman-operator}). Section \ref{sec:main-results} states the main theorems of the paper and their corollaries along with a number of remarks. Results of a numerical simulation supporting the main theorems is supplied as well. Section \ref{sec:proofs} collects the proofs of the main theorems in its subsections.

\subsection{Cascaded systems.}\label{subsec:cascaded-systems}
Given $x_{i}(t) \in \C^{d_{i}}$ for $i=1,\dots, n$, an $n$-level cascaded system is defined to have generators $\mbf N_i : \C^{d_1}\times \cdots \times \C^{d_i} \to \C^{d_i}$ of the form
	\begin{equation}
	\begin{aligned}\label{eq:nonlinear-cascade}
	x_1(t+1) &= L_1 x_1(t) + N_1(x_1(t))& \\
	x_i(t+1) &= L_i x_i(t) + \sum_{j=1}^{i-1} C_{i,j}x_j(t) + N_i(x_1(t), \dots, x_i(t)),& & (i=2,\dots, n),&
	\end{aligned}
	\end{equation}
where $L_{i} : \C^{d_i} \to \C^{d_i}$ and $C_{i,j} : \C^{d_j} \to \C^{d_i}$ are linear operators and $N_i : \C^{d_1} \times \cdots \times \C^{d_i} \to \C^{d_i}$ are nonlinear operators (see Fig.\ \ref{fig:cascade-system}). Each generator $\mbf N_i$ is called the component system of the nonlinear cascade. For any fixed $j \in \set{1,\dots, n}$, if $i < j$, then $\mbf N_i$ is called an upstream system to $\mbf N_j$ and if $k > j$, we call $\mbf N_k$ a downstream system to $\mbf N_j$. Each component system $\mbf N_i$ depends only upon itself and its upstream systems.

In proving our results, it will be beneficial to analyze the linearized system,
	\begin{equation}
	\begin{aligned}\label{eq:linear-cascade}
	x_1(t+1) &= L_1 x_1(t)\\
	x_i(t+1) &= L_i x_i(t) + \sum_{j=1}^{i-1} C_{i,j}x_j(t),& 	&(i=2,\dots, n)
	\end{aligned}
	\end{equation}
and the associated nominal linear system which is obtained by setting the linear coupling terms $C_{i,j}$ to 0:
	\begin{align}\label{eq:nominal-linear-system}
	x_i(t+1) & = L_i x_i(t), \qquad (i=1,\dots, n).
	\end{align}

\begin{figure}[h]
\begin{center}
\includegraphics[width=0.9\textwidth]{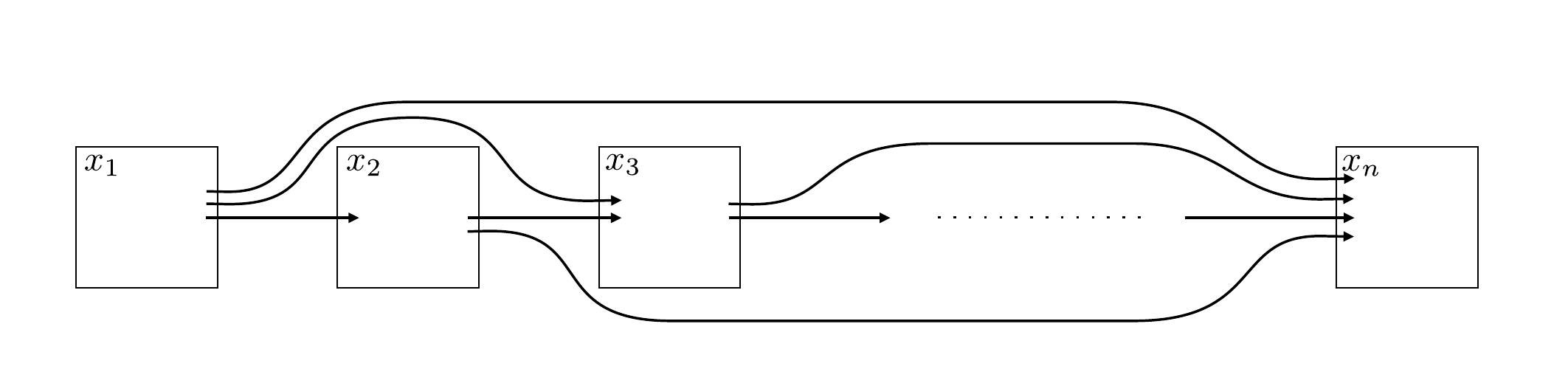}
\caption{$n$-level cascade system}
\label{fig:cascade-system}
\end{center}
\end{figure}

\subsection{Solution operators.}\label{subsec:solution-operator}
For $i=1,\dots, n$, let $\Pi_{i} : \C^{d_1} \times\cdots\times \C^{d_n} \to \C^{d_i}$ denote the canonical projections
	\begin{align}\label{eq:canonical-projection}
	\Pi_{i}(x_1,\dots,x_n) = x_i.
	\end{align}
The orbit $\msf{NonLin} : \C^{d_1} \times \cdots \times \C^{d_n} \to \C^{d_1} \times \cdots \times \C^{d_n}$ for the nonlinear cascaded system \eqref{eq:nonlinear-cascade} is denoted as\footnote{In \eqref{eq:vector-valued-function-notation} and later, we are using the notation that if we have a collection of maps $f_i : X \to X_i$, $(i=1,\dots,n)$, the vector-valued map $\mbf f : X \to X_1\times\cdots\times X_n$ defined by $\mbf f(x) := (f_1(x),\dots,f_n(x))$ is written as $f(x) \equiv (f_1,\dots, f_n)(x)$.}
	\begin{align}
	\msf{NonLin}^{\circ t}(x_1, \dots, x_n) 
	&= (\mc N_1^{\circ t}(x_1), \mc N_2^{\circ t}(x_1, x_2), \dots, \mc N_n^{\circ t}(x_1,\dots,x_n)) \label{eq:nonlinear-cascade-solution}\\
	&= (\mc N_1^{\circ t}\circ \Pi_1, \mc N_2^{\circ t}\circ (\Pi_1, \Pi_2), \dots, \mc N_n^{\circ t} \circ (\Pi_1,\dots, \Pi_n) )(x_1,\dots,x_n) \label{eq:vector-valued-function-notation}.
	\end{align}
where for each $i=1,\dots, n$, $\set{\mc N_{i}^{\circ t} : \C^{d_1} \times \cdots \times \C^{d_i} \to \C^{d_i}}_{t\in\N_0}$ is the family of solution operators for the $i$-th system. That is, given $(x_1,\dots, x_n)$, $\mc N_i^{\circ t}(x_1,\dots, x_i) \equiv x_i(t)$ is the value in the $i$-th system at time $t$ of the orbit passing through initial condition $(x_1,\dots,x_i)$. For $i=1,\dots, n$, define $\mc N_i^{\circ 0} = I_{\C_{d_i}}$, where $I_{\C_{d_i}}$ is the identity operator on $\C^{d_i}$. Then, for all $i \geq 1$ and $t \in \N$, these operators satisfy
	\begin{equation}
	\begin{aligned}
	\mc N_{i}^{\circ t}(x_1,\dots,x_i) &= \Pi_i \circ \msf{NonLin}^{\circ t}(x_1,\dots,x_n) \\
	&= \Big(L_i \circ\Pi_i + \big(\sum_{j=1}^{i-1} C_{i,j}\circ\Pi_j\big) + N_i \circ (\Pi_1,\dots, \Pi_i) \Big) (\msf{NonLin}^{\circ t}(x_1,\dots,x_n)).
	\end{aligned}
	\end{equation}
For $t=1$ we denote $\msf{NonLin}^{\circ 1} \equiv \msf{NonLin}$.
For $i\leq j$, define the $(i,j)$-slice of $\msf{NonLin}$ as
	\begin{equation}\label{eq:nonlinear-section}
	\begin{aligned}
	\msf{NonLin}_{i:j}^{\circ t}(x_1, \dots, x_n) 
	&= (\Pi_i, \Pi_{i+1},\dots, \Pi_j)(\msf{NonLin}^{\circ t}(x_1, \dots, x_n)) \\
	&= (\Pi_i \circ \msf{NonLin}^{\circ t}, \Pi_{i+1} \circ \msf{NonLin}^{\circ t}, \dots, \Pi_j\circ \msf{NonLin}^{\circ t})(x_1, \dots, x_n).
	\end{aligned}
	\end{equation}
If $i=j$, we will denote $\msf{NonLin}_{i:j}$ as $\msf{NonLin}_{i}$.

The orbit for the linear cascade \eqref{eq:linear-cascade} is defined as
	\begin{equation}\label{eq:linear-cascade-solution}
	\begin{aligned}
	\msf{Lin}^{\circ t}(x_1, \dots, x_n) 
	&= (\mc L_1^{\circ t}(x_1), \mc L_2^{\circ t}(x_1, x_2), \dots, \mc L_n^{\circ t}(x_1,\dots,x_n))
	\end{aligned}
	\end{equation}
where $\set{ \mc L_i^{\circ t} : \C^{d_1} \times \cdots \times \C^{d_i} \to \C^{d_i}}_{t\in\N_0}$ is the family of solution operators for system $i$ which satisfy
	\begin{equation}
	\begin{aligned}
	\mc L_{i}^{\circ t}(x_1,\dots,x_i) &= \Pi_i \circ \msf{Lin}^{\circ t}(x_1,\dots,x_n) \\
	&= \Big(L_i\circ\Pi_i + \sum_{j=1}^{i-1} C_{i,j}\circ\Pi_j \Big) (\msf{Lin}^{\circ t}(x_1,\dots,x_n)).
	\end{aligned}
	\end{equation}
The orbit for the full nominal system is
	\begin{equation}\label{eq:nominal-system-solution}
	\begin{aligned}
	\msf{Nom}^{\circ t}(x_1,\dots, x_n) 
	&= (L_1^{t}(x_1), L_2^{t}(x_2), \dots, L_n^{t}(x_n))
	\end{aligned}
	\end{equation}
where $L_i$ is the same as in \eqref{eq:linear-cascade}.
We will define the $(i,j)$-slices of $\msf{Lin}$ and $\msf{Nom}$ analogous to \eqref{eq:nonlinear-section}.

\subsection{Asymptotic equivalence.}\label{subsec:asymptotic-equivalence}
The main results of this paper rely on the concept of asymptotic proportionality and asymptotic equivalence between the component systems of the linear cascade and the corresponding nominal component system. First we define the norm for a cascaded system

\begin{definition}
Let $\norm{\cdot}_{\C^{d_i}}$ be a norm for $\C^{d_i}$. Define a norm on $\C^{d_1} \times\cdots\times \C^{d_n}$ by
	\begin{align}\label{eq:composite-norm}
	\norm{(x_1,\dots,x_n)}_{\times} = \sum_{i=1}^{n} \norm{x_i}_{\C^{d_i}}.
	\end{align}
\end{definition}

For linear maps and operators from one vector space to another, we will denote their induced norm as $\norm{\cdot}$. The norms used in the definition of the induced norm will be clear from the context.
 
\begin{definition}\label{def:asymptotic-equivalence}
We say that $\msf{Lin}$ is asymptotically equivalent to $\msf{Nom}$ if there exists a perturbation function $\msf{pert} : \C^{d_1}\times \cdots \times \C^{d_n} \to \C^{d_1}\times \cdots \times \C^{d_n}$ such that for all $i=1,\dots, n$ and all $x = (x_1,\dots,x_n) \in \C^{d_1}\times \cdots \times \C^{d_n}$,
	\begin{equation}\label{eq:asymptotic-equivalence}
	\lim_{t\to\infty} \norm{ \Pi_i \circ \msf{Lin}^{\circ t}(x) - \Pi_{i} \circ \msf{Nom}^{\circ t}(\msf{pert}(x)) }_{\C^{d_i}}  = 0.
	\end{equation}
We say that system $i$ has 0 asymptotic relative error if
	\begin{equation}\label{eq:asymptotic-relative-error}
	\lim_{t\to\infty} \frac{\norm{ \Pi_i \circ \msf{Lin}^{\circ t}(x) - \Pi_{i} \circ \msf{Nom}^{\circ t}(\msf{pert}(x)) }_{\C^{d_i}} }{\norm{\msf{Nom}_{i}}^{t}} = 0.
	\end{equation}
\end{definition}

A related concept is the asymptotic proportionality of two solution operators. It is the easier of the two to satisfy:

\begin{definition}\label{def:asymptotic-proportional}
We say that $\msf{Lin}$ is asymptotically proportional to $\msf{Nom}$ if there exist a perturbation function $\msf{pert} : \C^{d_1}\times \cdots \times \C^{d_n} \to \C^{d_1}\times \cdots \times \C^{d_n}$ and functions $\zeta_i : \C^{d_1}\times \cdots \times \C^{d_{i-1}} \to \R^+$ such that for all $x = (x_1,\dots,x_n) \in \C^{d_1}\times \cdots \times \C^{d_n}$ and all $i \in \set{2,\dots,n}$,
	\begin{equation}\label{eq:asymptotic-proportionality}
	\lim_{t\to\infty} \frac{\norm{ \Pi_i \circ \msf{Lin}^{\circ t}(x) - \Pi_{i} \circ \msf{Nom}^{\circ t}(\msf{pert}(x)) }_{\C^{d_i}} }{\norm{\msf{Nom}_{i}}^{t}}
	\leq \zeta_i(x_1,\dots,x_{i-1}).
	\end{equation}
\end{definition}

For the linear cascade, a perturbation function $\msf{pert} : \C^{d_1} \times \cdots \times \C^{d_{n}} \to \C^{d_1} \times \cdots \times \C^{d_{n}}$ will itself have a cascade structure
	\begin{equation}
	\msf{pert}(x_1,\dots,x_n) = \big(\msf{pert}_{1}(x_1),\msf{pert}_{2}(x_1,x_2), \dots, \msf{pert}_{n}(x_1,\dots, x_n) \big).
	\end{equation}

\begin{remark}
By \eqref{eq:linear-cascade-solution} and \eqref{eq:nominal-system-solution}, equations \eqref{eq:asymptotic-equivalence}, \eqref{eq:asymptotic-relative-error}, and \eqref{eq:asymptotic-proportionality} can be rewritten as
	\begin{equation}
	\lim_{t\to\infty} \norm{ \mc L_i^{\circ t}(x_1,\dots,x_i) - L_i^{t}(\msf{pert}_i(x_1,\dots, x_i)) }_{\C^{d_i}} = 0
	\end{equation}

	\begin{equation}
	\lim_{t\to\infty} \frac{\norm{ \mc L_i^{\circ t}(x_1,\dots,x_i) - L_i^{t}(\msf{pert}_i(x_1,\dots, x_i)) }_{\C^{d_i}} }{\norm{L_i}^{t}} = 0
	\end{equation}
and
	\begin{equation}
	\lim_{t\to\infty} \frac{\norm{ \mc L_i^{\circ t}(x_1,\dots,x_i) - L_i^{t}(\msf{pert}_i(x_1,\dots, x_i)) }_{\C^{d_i}} }{\norm{L_i}^{t}} \leq \zeta_i(x_1,\dots,x_{i-1})
	\end{equation}
Clearly, for $\norm{\msf{Nom}_i} < 1$, zero asymptotic relative error implies the other two.
\end{remark}

\subsection{The Koopman operator.}\label{subsec:koopman-operator}
Let $\mc A_i$ be a sub-algebra of $C(\C^{d_i}, \C)$, the set of continuous complex-valued functions on $\C^{d_i}$, where algebra addition is given by normal function addition, $(f+g)(x_i) = f(x_i) + g(x_i)$, and algebra multiplication is given by pointwise multiplication of functions, $(f\cdot g)(x_i) = f(x_i)g(x_i)$. 

Denote the Koopman operator associated with the $i$-th nominal component system \eqref{eq:nominal-linear-system} as 
	\begin{equation}
	\begin{aligned}
	\mc U_{\msf{Nom}_i} : \mc A_i &\to \mc A_i \\
	(\mc U_{\msf{Nom}_i}^{\circ t} f)(x_i) &= f( \msf{Nom}_{i}^{\circ t}(x_i) ) = f( L_{i}^t x_i).
	\end{aligned}
	\end{equation}

\subsubsection{Principal eigenfunctions of component systems.}
In \cite{Mohr:2014wm,Mohr:2016vk}, the principal eigenfunctions of Koopman operator were defined with respect to the linearized system. Assume that $L_i$ is diagonalizable
	\begin{equation}
	L_i = V_i \Lambda_i V_i^{-1},
	\end{equation}
with the columns of $V_i$ being the eigenvectors of $L_i$ and $\Lambda_i = \diag(\lambda_{i,1},\dots, \lambda_{i,d_i})$ being a diagonal matrix containing the eigenvalues of $L_i$. Define the $s$-th principal eigenfunction of the $i$-th system as
	\begin{align}\label{eq:principal-eigenfunction-system-i}
	\psi_{i,s}(x_i) &= (\hat e_{d_i,s}^\ast V_i^{-1}) x_i, &  & (s=1,\dots, d_i)
	\end{align}
where $\hat e_{d_i,s}$ is the $s$-th canonical basis vector of $\C^{d_i}$ and $\hat e_{d_i,s}^{\ast}$ is its conjugate transpose\footnote{Note that $w_{i,s} := (\hat e_{i,s}^\ast V_i^{-1})^{*} = (V_{i}^{-1})^* \hat e_{i,s}$ is the $s$-th dual basis vector in system $i$; that is $\inner{v_{i,t}}{w_{i,s}}_{\C^{d_i}} = w_{i,s}^* v_{i,t} = \delta_{s,t}$, where $v_{i,t}$ is the $t$-th eigenvector of $L_i$}. The function $\psi_{i,s}$ is the $s$-th coordinate functional corresponding to the eigenbasis of the $i$-th system. These are indeed eigenfunctions of $\mc U_{\msf{Nom}_i}$ at eigenvalue $\lambda_{i,s}$ as shown by the following calculation: for any $s\in \set{1,\dots, d_{i}}$
	\begin{equation}
	(\mc U_{\msf{Nom}_i}\psi_{i,s})(x) = \psi_{i,s}(L_i x) = (\hat e_{d_i,s}^\ast V_i^{-1})(L_i x) = \hat e_{d_i,s}^\ast \Lambda_i V_i^{-1} x = \lambda_{i,s} (\hat e_{d_i,s}^\ast V_i^{-1})x = \lambda_{i,s}\psi_{i,s}(x). 
	\end{equation}
For all $i \in \set{1,\dots, n}$, we define $\psi_{i,0}(x) = 1$ so that $\psi_{i,0}$ is an eigenfunction at 1:
	\begin{align}
	(\mc U_{\msf{Nom}_i}\psi_{i,0})(x) = 1 = \psi_{i,0}(x).
	\end{align}

\subsubsection{Koopman operators and principal eigenfunctions for cascaded systems}
We use the algebra of observables $\mc A_i$ on $\C^{d_i}$ to build a space of observables $\mc A$ for the cascade system. Let $\mc A = \mc A_1 \otimes \cdots \otimes \mc A_n$ and denote $\mc U_{\msf{Nom}} : \mc A \to \mc A$ and $\mc U_{\msf{Lin}} : \mc A \to \mc A$ as the Koopman operators associated with the solution operators $\msf{Nom}$ and $\msf{Lin}$, respectively. Recall that a tensor product $f_1 \otimes \cdots \otimes f_n \in \mc A_1 \otimes \cdots \otimes \mc A_n$ acts on multilinear functionals on $\mc A_1 \times \cdots \times \mc A_n$ (see \cite{Ryan:2002ku}). Let $\delta_{(x_1,\dots, x_n)} : \mc A_1 \times \cdots \times \mc A_n \to \C$ be the evaluation functional defined as
	\begin{equation}
	\delta_{(x_1,\dots, x_n)}(f_1, \dots, f_n) = f_1(x_1)\cdots f_n(x_n),
	\end{equation}
where the dots on the right side indicate multiplication in $\C$. The tensor product is defined as
	\begin{align}\label{eq:tensor-product-observables}
	(f_1 \otimes \cdots \otimes f_n)(\delta_{(x_1,\dots, x_n)}) 
	&= \delta_{(x_1,\dots, x_n)}(f_1, \dots, f_n) .
	\end{align}
Due to this, there is no confusion in writing
	\begin{align}
	(f_1 \otimes \cdots \otimes f_n)(x_1,\dots,x_n) \equiv f_1(x_1)\cdots f_n(x_n)
	\end{align}
and we can consider $f_1 \otimes \cdots \otimes f_n$ as a function on the cascaded system's state space $\C^{d_1} \times\cdots\times \C^{d_n}$.

Principal eigenfunctions for $\mc U_{\msf{Nom}} : \mc A \to \mc A$ can be defined from the component systems' principal eigenfunctions \eqref{eq:principal-eigenfunction-system-i} by a trivially extending their domain from the component system's state space $\C^{d_i}$ to the cascaded system's state space $\C^{d_1}\times\cdots\times\C^{d_n}$. To this end, for each $i\in \set{1,\dots,n}$ and $s_i \in \set{1,\dots, d_i}$, define a principal eigenfunction for $\mc U_{\msf{Nom}}$ as
	\begin{align}\label{eq:Nom-principal-eigenfunctions}
	\psi_{(0,\dots,0,s_i,0,\dots,0)}(x_1,\dots,x_n) = (\psi_{i,s_i} \circ \Pi_i)(x_1,\dots,x_n) \equiv \psi_{i,s_i}(x_i).
	\end{align}
Often we will write $\psi_{s_i\hat e_{n,i}} \equiv \psi_{(0,\dots,0,s_i,0,\dots,0)}$. Since by convention $\psi_{i,0} \equiv 1$ for all $i$, this principal eigenfunction for $\mc U_{\msf{Nom}}$ can be written as a tensor product of principal eigenfunctions from the component systems,
	\begin{equation}\label{eq:principal-eigenfunctions-nominal-system-tensor-product-form}
	\begin{aligned}
	\psi_{(0,\dots,0,s_i,0,\dots,0)}(x_1,\dots,x_n) 
	&= \psi_{i,s_i}(x_i) \\
	&\equiv (1\otimes\cdots\otimes 1 \otimes \psi_{i,s_i}\otimes 1 \otimes\cdots\otimes1)(x_1,\dots,x_n) \\
	&= (\psi_{1,0}\otimes\cdots\otimes\psi_{i-1,0}\otimes \psi_{i,s_i}\otimes \psi_{i+1,0}\otimes\cdots\otimes\psi_{n,0})(x_1,\dots,x_n).
	\end{aligned}
	\end{equation}

The multiplication operation in each algebra $\mc A_i$ is given by pointwise products of functions. We can define a multiplication operation for the tensor product, $\bullet : (\mc A_1 \otimes \cdots \otimes \mc A_n) \times (\mc A_1 \otimes \cdots \otimes \mc A_n) \to (\mc A_1 \otimes \cdots \otimes \mc A_n)$, as
	\begin{equation}\label{eq:tensor-product-space-multiplication}
	(f_1 \otimes\cdots\otimes f_n) \bullet (g_1 \otimes\cdots\otimes g_n) 
	= (f_1 \cdot g_1) \otimes\cdots\otimes (f_n \cdot g_n)
	\end{equation}
Products of principal eigenfunctions of the form \eqref{eq:principal-eigenfunctions-nominal-system-tensor-product-form} give eigenfunctions of $\mc U_{\msf{Nom}}$. Denote $\psi_{(s_1,\dots,s_n)} \in \mc A$ as
	\begin{align}\label{eq:principal-eigenfunction-cascade}
	\psi_{(s_1,\dots,s_n)}(x_1,\dots, x_n) = (\psi_{1,s_1} \otimes\cdots\otimes \psi_{n,s_n})(x_1,\dots, x_n).
	\end{align}
where $(s_1,\dots,s_n) \in \set{0,\dots d_1} \times \cdots \times \set{0,\cdots,d_n}$. It is clear that this observable can be constructed as a $\bullet$-product of principal eigenfunctions of the form \eqref{eq:principal-eigenfunctions-nominal-system-tensor-product-form}; namely
	\begin{equation}
	\psi_{(s_1,\dots,s_n)} = \psi_{(s_1,0,\dots,0)} \bullet \psi_{(0,s_2,0,\dots,0)} \bullet \cdots \bullet \psi_{(0,\dots,0,s_n)}.
	\end{equation}
Equation \eqref{eq:principal-eigenfunction-cascade} is an eigenfunction at $\lambda_{1,s_1}\cdots\lambda_{n,s_n}$ as shown by the following computation:
	\begin{align*}
	(\mc U_{\msf{Nom}}\psi_{(s_1,\dots,s_n)})(x_1,\dots, x_n)%
	&= \psi_{(s_1,\dots,s_n)}(\msf{Nom}(x_1,\dots, x_n)) \\%
	&= \psi_{(s_1,\dots,s_n)}(L_1 x_1,\dots, L_n x_n) \\%
	&= (\psi_{1,s_1} \otimes\cdots\otimes \psi_{n,s_n})(L_1 x_1,\dots, L_n x_n) \\%
	&= \psi_{1,s_1}(L_1 x_1)\cdots \psi_{n,s_n}(L_n x_n) \\%
	&= (\mc U_{\msf{Nom}_1}\psi_{1,s_1})(x_1)\cdots (\mc U_{\msf{Nom}_n}\psi_{n,s_n})(x_n) \\%
	&= (\lambda_{1,s_1}\psi_{1,s_1})(x_1)\cdots (\lambda_{n,s_n}\psi_{n,s_n})(x_n) \\%
	&= (\lambda_{1,s_1}\cdots\lambda_{n,s_n})(\psi_{1,s_1}(x_1)\cdots \psi_{n,s_n}(x_n)) \\%
	&= (\lambda_{1,s_1}\cdots\lambda_{n,s_n})(\psi_{1,s_1} \otimes\cdots\otimes \psi_{n,s_n})(x_1,\dots, x_n) \\%
	&= (\lambda_{1,s_1}\cdots\lambda_{n,s_n})\psi_{(s_1,\dots,s_n)}(x_1,\dots, x_n) .
	\end{align*}
The preceding work is an example of the more general result that eigenfunctions of the Koopman operator form a semigroup under pointwise multiplication of functions, as pointed out in \cite{Budisic:2012cf}.

\begin{remark}
Each subalgebra $\mc A_i$ for the component system $i$ is embedded in $\mc A$. The embedding is given by the map
	\begin{equation*}
	f \mapsto \underbrace{1 \otimes\cdots\otimes 1}_{\text{$i-1$ times}} \otimes f \otimes \underbrace{1 \otimes \cdots \otimes 1}_{\text{$n-i$ times}},
	\end{equation*}
where the $f$ on the right hand side is in the $i$-th position. Furthermore, if each $\mc A_i$ is generated by the principal eigenfunctions $\psi_{i,1},\dots, \psi_{i,d_i}$, then $\mc A$ is generated by the principal eigenfunctions $\set{\psi_{(0,\dots,0,s_i,0,\dots,0)} \given \forall i \in\set{1,\dots,n}, \forall s_{i} \in \set{1,\dots, d_i}}$.
\end{remark}

\section{Main results}\label{sec:main-results}
Consider the special case of the nonlinear and linear cascades (eq.'s \eqref{eq:nonlinear-cascade} and \eqref{eq:linear-cascade}, respectively) where system $i$ is only affected by system $i-1$ (see Fig.\ \ref{fig:chained-cascade-system}). This corresponds to the situation where $C_{i,j}$ is non-zero if and only if $j = i-1$ and $N_{i}(x_1,\dots, x_i) = N_{i}(x_{i-1},x_i)$:
	\begin{equation}
	\begin{aligned}\label{eq:nonlinear-chained-cascade}
	x_1(t+1) &= L_1 x_1(t) + N_1(x_1(t))& & &\\
	x_i(t+1) &= L_i x_i(t) + C_{i,i-1} x_{i-1}(t) + N_i(x_{i-1}(t),x_i(t))& &(i=2,\dots, n).
	\end{aligned}
	\end{equation}
and	
	\begin{equation}
	\begin{aligned}\label{eq:linear-chained-cascade}
	x_1(t+1) &= L_1 x_1(t)\\
	x_i(t+1) &= L_i x_i(t) + C_{i,i-1} x_{i-1}(t)& &(i=2,\dots, n).
	\end{aligned}
	\end{equation}
We will call cascades having the form \eqref{eq:nonlinear-chained-cascade} and \eqref{eq:linear-chained-cascade} chained cascades.

\begin{figure}[h]
\begin{center}
\includegraphics[width=0.9\textwidth]{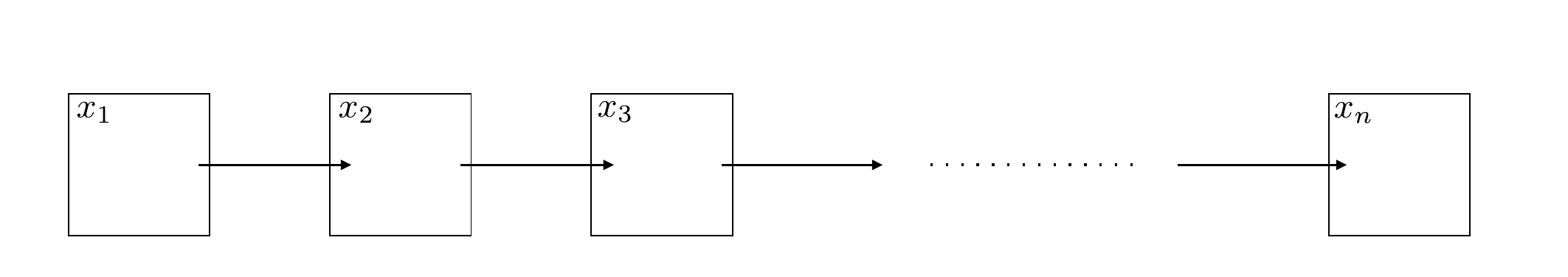}
\caption{Chained cascade system}
\label{fig:chained-cascade-system}
\end{center}
\end{figure}

\begin{condition}\label{eq:chained-cascade-conditions}
The following conditions will be in force for all following results:
\begin{enumerate}[(i)]
\item $L_{i}$ is invertible and diagonalizable for all $i=1,\dots,n$, 
	\begin{equation}
	L_{i} V_{i} = V_{i} \Lambda_{i}.
	\end{equation}
\item (Disjoint spectrums) The spectrums of each layer are pairwise disjoint. That is for $i,j\in\set{1,\dots,n}$ satisfying $i\neq j$ 
	\begin{equation} 
	\sigma(L_{i}) \cap \sigma(L_{j}) = \emptyset.
	\end{equation}
\item\label{cond:hierarchy-of-norms} $\norm{L_1} < \norm{L_2} < \cdots < \norm{L_n} \leq 1$. 
\end{enumerate}
\end{condition}

\begin{remark}
Condition \eqref{cond:hierarchy-of-norms} requires that the response times of upstream systems are faster than the downstream systems they feed into.
\end{remark}

\subsection{Theorems}

\begin{theorem}[0 asymptotic relative error for chained, linear cascades]\label{thrm:linear-asymptotic-equivalence}
Assume Condition \ref{eq:chained-cascade-conditions} is in effect. Then \eqref{eq:linear-chained-cascade} has 0 asymptotically relative error in the sense of \eqref{eq:asymptotic-relative-error}. In particular, for all $i\geq 1$\footnote{We take the empty sum $\sum_{j=1}^{0}$ to be 0.} and all $t\geq 0$,
	\begin{align}
	\norm{ \Pi_{i} \circ \msf{Lin}^{\circ t}(x_1,\dots, x_{n})  -  \Pi_{i} \circ \msf{Nom}^{\circ t}(\msf{pert}(x_1,\dots, x_{n}))}
	&\leq \sum_{j=1}^{i-1} \norm{D_{i,j}}\norm{L_{j}^t \msf{pert}_{j}(x_1,\dots,x_{j})} \label{eq:linear-cascade-absolute-error}\\
	&\leq \left(\sum_{j=1}^{i-1} \norm{D_{i,j}}\norm{\msf{pert}_{j}(x_1,\dots,x_{j})} \right)\norm{L_i}^t \label{eq:thrm-asymptotic-proportionality}
	\end{align}
and for all $i\geq 1$
	\begin{align}\label{eq:thrm-asymptotic-relative-error}
	\lim_{t\to\infty} \frac{\norm{ \Pi_{i} \circ \msf{Lin}^{\circ t}(x_1,\dots, x_{n})  -  \Pi_{i} \circ \msf{Nom}^{\circ t}(\msf{pert}(x_1,\dots, x_{n})) } }{\norm{L_{i}}^{t}}
	= 0.
	\end{align}
	
The perturbation function for the initial condition, $\msf{pert} : \C^{d_1} \times \cdots \times \C^{d_n} \to \C^{d_1} \times \cdots \times \C^{d_n}$, is defined as
	\begin{equation}
	\begin{aligned}\label{eq:perturbation-function}
	\msf{pert}(x_1,\dots, x_n) &= (\msf{pert}_{1}\circ \Pi_1, \msf{pert}_{2}\circ (\Pi_1, \Pi_2),\dots, \msf{pert}_{n-1}\circ (\Pi_1, \dots, \Pi_{n-1}), \msf{pert}_{n} )(x_1,\dots, x_n)
	\end{aligned}
	\end{equation}
and the perturbations for each system $i$,  $\msf{pert}_{i} : \C^{d_1} \times \cdots \times \C^{d_i} \to \C^{d_i}$, are defined recursively by
	\begin{align}
	\msf{pert}_{1}(x_1) &= x_1 \\
	\msf{pert}_{i}(x_1,\dots, x_i) &= x_i + \sum_{j=1}^{i-1} (-1)^{i-1-j} D_{i,j} \msf{pert}_{j}(x_1, \dots, x_{j})  & \forall i\in\set{2,\dots,n}. \label{eq:linear-perturbation-function}
	\end{align}
where
	\begin{align}
	D_{i,i} &= I_{d_i}  & \forall i\in\set{1,\dots,n},\\
	D_{i,j} &= L_{i}^{-1} V_{i} \tilde C_{i,j} V_{j}^{-1} & \forall i\in\set{2,\dots,n}, \forall j\in\set{1,\dots, i-1},
	\end{align}
and the matrix $\tilde C_{i,j} \in \C^{d_{i} \times d_{j}}$ has elements
	\begin{align}
	&[\tilde C_{i,j}]_{\ell,m} = \left[ V_{i}^{-1} C_{i,i-1} D_{i-1,j} V_{j}\right]_{\ell,m} \left(1 - \frac{\lambda_{j,m}}{\lambda_{i,\ell}} \right)^{-1} &
	\forall i\in\set{2,\dots,n}, \forall j\in\set{1,\dots, i-1}.
	\end{align}
\end{theorem}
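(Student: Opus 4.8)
The plan is to reduce the entire theorem to one \emph{exact} identity, which I then prove by induction on the layer index. Claim: for every $i\in\set{1,\dots,n}$ and every $t\ge 0$,
\begin{equation*}
\mc L_i^{\circ t}(x_1,\dots,x_i)\;=\;L_i^t\,\msf{pert}_i(x_1,\dots,x_i)\;-\;\sum_{j=1}^{i-1}(-1)^{i-1-j}\,D_{i,j}\,L_j^t\,\msf{pert}_j(x_1,\dots,x_j).\tag{$\star$}
\end{equation*}
Granting $(\star)$ the theorem is immediate. Since $\Pi_i\circ\msf{Lin}^{\circ t}=\mc L_i^{\circ t}$ and $\Pi_i\circ\msf{Nom}^{\circ t}\circ\msf{pert}=L_i^t\msf{pert}_i$, the vector bounded in \eqref{eq:linear-cascade-absolute-error} is exactly the negative of the correction sum in $(\star)$, so the triangle inequality gives \eqref{eq:linear-cascade-absolute-error}; submultiplicativity of the induced norm, $\norm{L_j^t}\le\norm{L_j}^t$, together with $\norm{L_j}\le\norm{L_i}$ for $j<i$ (Condition \ref{eq:chained-cascade-conditions}\eqref{cond:hierarchy-of-norms}), upgrades this to \eqref{eq:thrm-asymptotic-proportionality}; and dividing the bound \eqref{eq:linear-cascade-absolute-error} by $\norm{\msf{Nom}_i}^t=\norm{L_i}^t$ leaves a quantity $\le\sum_{j<i}\norm{D_{i,j}}\,\norm{\msf{pert}_j(x_1,\dots,x_j)}\,(\norm{L_j}/\norm{L_i})^t$, each summand of which tends to $0$ since $\norm{L_j}<\norm{L_i}$ strictly for $j<i$; this proves \eqref{eq:thrm-asymptotic-relative-error} (the case $i=1$ being the empty sum, where both sides vanish identically).

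To prove $(\star)$ I induct on $i$. The base case $i=1$ is just $\mc L_1^{\circ t}(x_1)=L_1^t x_1=L_1^t\msf{pert}_1(x_1)$ with empty correction sum. For the inductive step I write the discrete variation-of-constants (Duhamel) formula for the $i$-th chained recursion \eqref{eq:linear-chained-cascade},
\begin{equation*}
\mc L_i^{\circ t}(x_1,\dots,x_i)=L_i^t x_i+\sum_{k=0}^{t-1}L_i^{\,t-1-k}\,C_{i,i-1}\,\mc L_{i-1}^{\circ k}(x_1,\dots,x_{i-1}),
\end{equation*}
and substitute the inductive hypothesis for $\mc L_{i-1}^{\circ k}$. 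Absorbing the $j=i-1$ term of that hypothesis into the sum via $D_{i-1,i-1}=I_{d_{i-1}}$ and flipping one sign puts it in the compact form $\mc L_{i-1}^{\circ k}=\sum_{j=1}^{i-1}(-1)^{i-1-j}D_{i-1,j}L_j^k\msf{pert}_j$; interchanging the two finite sums then reduces the problem to evaluating, for each $j<i$, the operator convolution $\sum_{k=0}^{t-1}L_i^{\,t-1-k}M_{i,j}L_j^k$ with $M_{i,j}:=C_{i,i-1}D_{i-1,j}$.

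The crux is this convolution, via a telescoping identity: if $X$ solves the discrete Sylvester equation $L_iX-XL_j=M_{i,j}$, then $\sum_{k=0}^{t-1}L_i^{\,t-1-k}M_{i,j}L_j^k=L_i^tX-XL_j^t$, because the $k$-th summand equals $L_i^{\,t-k}XL_j^k-L_i^{\,t-k-1}XL_j^{k+1}$. Condition \ref{eq:chained-cascade-conditions}(ii) (disjoint spectra) makes this Sylvester equation uniquely solvable; diagonalizing via Condition \ref{eq:chained-cascade-conditions}(i) and conjugating by $V_i,V_j$ turns it into the scalar equations $(\lambda_{i,\ell}-\lambda_{j,m})\hat X_{\ell m}=[V_i^{-1}M_{i,j}V_j]_{\ell m}$, whose solution carries exactly the factor $\lambda_{i,\ell}^{-1}(1-\lambda_{j,m}/\lambda_{i,\ell})^{-1}$ — i.e. $X=L_i^{-1}V_i\tilde C_{i,j}V_j^{-1}=D_{i,j}$ in the theorem's notation. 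Substituting $L_i^tD_{i,j}-D_{i,j}L_j^t$ back, the terms $L_i^tD_{i,j}\msf{pert}_j$, together with $L_i^t x_i$, reassemble by the recursive definition \eqref{eq:linear-perturbation-function} of $\msf{pert}_i$ into $L_i^t\msf{pert}_i$, while the terms $-D_{i,j}L_j^t\msf{pert}_j$ are precisely the correction sum of $(\star)$ at level $i$; this closes the induction.

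The main obstacle I expect is bookkeeping rather than analysis: one must match the three recursively-defined families $D_{i,j}$, $\tilde C_{i,j}$, $\msf{pert}_i$ — with their alternating signs $(-1)^{i-1-j}$ — against the two-term output $L_i^tX-XL_j^t$ of the telescoping sum, and verify the Sylvester solution is $D_{i,j}$ on the nose (this is where the $(1-\lambda_{j,m}/\lambda_{i,\ell})^{-1}$ factor in the definition of $\tilde C_{i,j}$ is forced). The genuinely hypothesis-driven points are few and localized: Conditions (i)–(ii) are used only to solve the Sylvester equations, and Condition \eqref{cond:hierarchy-of-norms} is used only at the end, to turn the exact identity $(\star)$ into the decay estimates \eqref{eq:thrm-asymptotic-proportionality}–\eqref{eq:thrm-asymptotic-relative-error}.
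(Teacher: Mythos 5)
Your proposal is correct and structurally parallels the paper's proof: both establish the same exact identity $(\star)$ for $\Pi_i\circ\msf{Lin}^{\circ t}$ by induction on the layer, use the discrete Duhamel formula for the chained recursion, and then deduce the bounds and limit via the triangle inequality, submultiplicativity, and Condition~\ref{eq:chained-cascade-conditions}\eqref{cond:hierarchy-of-norms}. The one place you genuinely diverge is the evaluation of the convolution $\sum_{k=0}^{t-1}L_i^{t-1-k}M_{i,j}L_j^k$. The paper conjugates by $V_i,V_j$ and evaluates the resulting geometric sum $\sum_k\Lambda_i^{-k}B\Lambda_j^k$ entrywise (Lemma~\ref{lemma:perturbation-terms}), obtaining $\tilde B-\Lambda_i^{-t}\tilde B\Lambda_j^t$ and then unconjugating. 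You instead recognize the convolution as a telescope: once $X$ solves the Sylvester equation $L_iX-XL_j=M_{i,j}$, the sum collapses to $L_i^tX-XL_j^t$, and the disjoint-spectra condition appears naturally as the unique-solvability criterion for the Sylvester operator rather than as a proviso buried in the denominator $(1-\lambda_{j,m}/\lambda_{i,\ell})^{-1}$. Your computation correctly identifies $X=V_i\Lambda_i^{-1}\tilde C_{i,j}V_j^{-1}=D_{i,j}$. The two routes are algebraically equivalent, but the Sylvester framing is cleaner conceptually: it isolates the role of Condition~\ref{eq:chained-cascade-conditions}(ii) as a well-posedness condition, it sidesteps the entrywise bookkeeping, and it would extend without change to the non-diagonalizable case (where the paper's $\Lambda_i^{-k}B\Lambda_j^k$ formula is unavailable but the Sylvester equation with $\sigma(L_i)\cap\sigma(L_j)=\emptyset$ is still uniquely solvable). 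The paper's version, by contrast, produces the explicit formula for $\tilde C_{i,j}$ directly, which is what the theorem statement requires; your approach must recover it as a by-product of inverting the diagonalized Sylvester operator, as you do.
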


\begin{remark}
The requirement that the eigenvalues of $L_i$ and $L_{j}$ ($i\neq j$) form disjoint sets is due to the form of the entries of the coupling matrix $\tilde C_{i,j}$. If any pair of eigenvalues from $L_i$ and $L_{j}$ were equal, $\tilde C_{i,j}$ would not be well-defined since its matrix elements have a term of the form $(1 - \lambda_{j,m}/\lambda_{i,\ell})^{-1}$. The requirement of disjoint-ness can be thought of as a non-resonance condition.
\end{remark}

\begin{proof}[Proof of Theorem \ref{thrm:linear-asymptotic-equivalence}]
Equations \eqref{eq:thrm-asymptotic-proportionality} and \eqref{eq:thrm-asymptotic-relative-error} follow from corollary \ref{lem:chained-linear-cascade-asymptotic-proportionality}. The expressions for the perturbation terms $\msf{pert}_{j}$ and the coupling matrices $D_{i,j}$ and $\tilde C_{i,j}$ are derived in the proof of lemma \ref{lemma:level-i-solution-with-perturbation-terms}.
\end{proof}

\begin{corollary}[Asymptotic equivalence for chained, linear cascades]\label{cor:asymptotic-equivalence-cascade-system-absolute-error}
	\begin{align}
	\lim_{t\to\infty} \norm{ \msf{Lin}^{\circ t}(x_1,\dots, x_{n})  -  \msf{Nom}^{\circ t}(\msf{pert}(x_1,\dots, x_{n})) }_{\times} 
	= 0.
	\end{align}
\end{corollary}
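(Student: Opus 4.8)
The plan is to split the composite distance into its $n$ layer-wise components and show that each one vanishes, using the estimates already established in Theorem \ref{thrm:linear-asymptotic-equivalence}. By the definition \eqref{eq:composite-norm} of $\norm{\cdot}_\times$,
\begin{equation*}
\norm{ \msf{Lin}^{\circ t}(x) - \msf{Nom}^{\circ t}(\msf{pert}(x)) }_{\times} = \sum_{i=1}^{n} \norm{ \Pi_i\circ\msf{Lin}^{\circ t}(x) - \Pi_i\circ\msf{Nom}^{\circ t}(\msf{pert}(x)) }_{\C^{d_i}},
\end{equation*}
a finite sum; so it suffices to prove that each summand tends to $0$ as $t\to\infty$.

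First I would dispose of the $i=1$ term, which is identically $0$: since $\msf{pert}_1(x_1)=x_1$ and $\mc L_1^{\circ t}=L_1^t$, both orbits have first component $L_1^t x_1$. For $i\ge 2$ I would feed the absolute-error bound \eqref{eq:linear-cascade-absolute-error} through the submultiplicativity estimate $\norm{L_j^t\,\msf{pert}_j(x_1,\dots,x_j)}\le\norm{L_j}^t\,\norm{\msf{pert}_j(x_1,\dots,x_j)}$, obtaining
\begin{equation*}
\norm{ \Pi_i\circ\msf{Lin}^{\circ t}(x) - \Pi_i\circ\msf{Nom}^{\circ t}(\msf{pert}(x)) } \le \sum_{j=1}^{i-1}\norm{D_{i,j}}\,\norm{\msf{pert}_j(x_1,\dots,x_j)}\,\norm{L_j}^t .
\end{equation*}
Every index $j$ here obeys $j\le i-1\le n-1$, so part \eqref{cond:hierarchy-of-norms} of Condition \ref{eq:chained-cascade-conditions} gives $\norm{L_j}\le\norm{L_{n-1}}<\norm{L_n}\le 1$, hence $\norm{L_j}<1$ and $\norm{L_j}^t\to 0$. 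The right-hand side is a fixed finite linear combination (coefficients independent of $t$) of such decaying terms, so it, and therefore the summand, tends to $0$.

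Adding up the $n$ limits yields the corollary. The argument is essentially bookkeeping, reducing everything to Theorem \ref{thrm:linear-asymptotic-equivalence}; the only subtlety worth flagging is that the hypotheses give only $\norm{L_n}\le 1$, not a strict bound, so the top layer need not contract — what saves the estimate is that the discrepancy in layer $i$ is forced entirely by the \emph{upstream} layers $1,\dots,i-1$, each of which is strictly contractive by part \eqref{cond:hierarchy-of-norms} of Condition \ref{eq:chained-cascade-conditions}. (One could instead invoke the stronger conclusion \eqref{eq:thrm-asymptotic-relative-error} of Theorem \ref{thrm:linear-asymptotic-equivalence} together with $\norm{L_i}^t\le 1$ for the $i=n$ layer.)
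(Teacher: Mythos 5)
Your argument is correct and is the natural deduction from Theorem \ref{thrm:linear-asymptotic-equivalence}; the paper in fact states Corollary \ref{cor:asymptotic-equivalence-cascade-system-absolute-error} without a written proof, so there is no distinct paper-side argument to compare against, and yours fills the gap precisely as intended. Your flag about the subtlety at $i=n$ (where $\norm{L_n}\le 1$ is not strict) is well taken: the clean way to see that it does no harm is, as you observe, that the error bound \eqref{eq:linear-cascade-absolute-error} involves only the upstream norms $\norm{L_j}$ with $j\le i-1$, each strictly below $1$ by Condition \ref{eq:chained-cascade-conditions}\eqref{cond:hierarchy-of-norms}; equivalently, one can multiply \eqref{eq:thrm-asymptotic-relative-error} through by $\norm{L_i}^t\le 1$. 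Both routes give the layer-wise vanishing, and summing the $n$ finitely many limits gives the corollary.
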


\begin{theorem}[Perturbation of principal eigenfunctions]\label{thrm:perturbation-principal-eigenfunctions}
Assume Condition \ref{eq:chained-cascade-conditions} is in effect. For any $i\geq 1$, $s_{i} \in \set{1,\dots, d_i}$, and $t \in \N$
	\begin{align}
	\abs*{\big(\mc U_{\msf{Lin}}^{\circ t} \psi_{(0,\dots,0,s_i,0,\dots,0)}\big)(x_1,\dots,x_n) 
	- \big(\mc U_{\msf{Nom}}^{\circ t}\psi_{(0,\dots,0,s_i,0,\dots,0)}\big)\circ\msf{pert}(x_1,\dots,x_{n}) } \nonumber\\
	\leq \norm{\psi_{i,s_i}} \sum_{j=1}^{i-1} \norm{D_{i,j}}\norm{L_{j}^t \msf{pert}_{j}(x_1,\dots,x_{j})}_{\C^{d_{j}}}. \label{eq:thrm-linear-principal-eigenfunction-proportionality}
	\end{align}
Furthermore, for any $i \in \set{1,\dots,n}$
	\begin{equation}\label{eq:thrm-linear-principal-eigenfunction-0-asymptotic-relative-error}
	\lim_{t\to\infty}\frac{\abs*{\big(\mc U_{\msf{Lin}}^{\circ t} \psi_{(0,\dots,0,s_i,0,\dots,0)}\big)(x_1,\dots,x_n) 
	- \big(\mc U_{\msf{Nom}}^{\circ t}\psi_{(0,\dots,0,s_i,0,\dots,0)}\big)\circ\msf{pert}(x_1,\dots,x_{n}) }}{\norm{L_i}^t}
	 = 0 .
	\end{equation}
\end{theorem}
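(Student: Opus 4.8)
The plan is to reduce everything to Theorem~\ref{thrm:linear-asymptotic-equivalence} by unfolding the Koopman operators on the tensor-product algebra and using the linearity of the coordinate functionals $\psi_{i,s_i}$.

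First I would rewrite the two terms inside the absolute value as compositions with the solution operators. By the definition of the Koopman operator $\mc U_{\msf{Lin}}$ associated with $\msf{Lin}$ and the tensor-product action \eqref{eq:tensor-product-observables}, one has $\big(\mc U_{\msf{Lin}}^{\circ t}\psi_{s_i\hat e_{n,i}}\big)(x_1,\dots,x_n) = \psi_{s_i\hat e_{n,i}}\big(\msf{Lin}^{\circ t}(x_1,\dots,x_n)\big)$, and since the ``thin'' observable satisfies $\psi_{s_i\hat e_{n,i}} = \psi_{i,s_i}\circ\Pi_i$ by \eqref{eq:Nom-principal-eigenfunctions}, this equals $\psi_{i,s_i}\big(\Pi_i\circ\msf{Lin}^{\circ t}(x_1,\dots,x_n)\big)$. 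In exactly the same way, $\big(\mc U_{\msf{Nom}}^{\circ t}\psi_{s_i\hat e_{n,i}}\big)\circ\msf{pert}(x_1,\dots,x_n) = \psi_{i,s_i}\big(\Pi_i\circ\msf{Nom}^{\circ t}(\msf{pert}(x_1,\dots,x_n))\big)$. Thus the quantity to be bounded is $\abs{\psi_{i,s_i}\big(\Pi_i\circ\msf{Lin}^{\circ t}(x) - \Pi_i\circ\msf{Nom}^{\circ t}(\msf{pert}(x))\big)}$, where I have also used the linearity of $\psi_{i,s_i}$ (it is the coordinate functional $x_i \mapsto (\hat e_{d_i,s_i}^\ast V_i^{-1})x_i$) to merge the two arguments into a single one.

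Second, applying the operator-norm estimate $\abs{\psi_{i,s_i}(v)} \le \norm{\psi_{i,s_i}}\,\norm{v}_{\C^{d_i}}$ reduces the problem to bounding $\norm{\psi_{i,s_i}}\,\norm{\Pi_i\circ\msf{Lin}^{\circ t}(x) - \Pi_i\circ\msf{Nom}^{\circ t}(\msf{pert}(x))}_{\C^{d_i}}$, and now inequality \eqref{eq:linear-cascade-absolute-error} of Theorem~\ref{thrm:linear-asymptotic-equivalence} gives precisely \eqref{eq:thrm-linear-principal-eigenfunction-proportionality}. For $i=1$ the sum is empty, $\msf{pert}_1=\mathrm{id}$, and the first coordinates of the two orbits coincide, so the inequality is the trivial $0\le 0$.

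Finally, for \eqref{eq:thrm-linear-principal-eigenfunction-0-asymptotic-relative-error} I would bound $\norm{L_j^t\msf{pert}_j(x_1,\dots,x_j)}_{\C^{d_j}} \le \norm{L_j}^t\,\norm{\msf{pert}_j(x_1,\dots,x_j)}_{\C^{d_j}}$, divide the right-hand side of \eqref{eq:thrm-linear-principal-eigenfunction-proportionality} by $\norm{L_i}^t$, and note that every surviving term then carries a factor $(\norm{L_j}/\norm{L_i})^t$ with $j < i$, which tends to $0$ by the strict inequalities $\norm{L_1}<\cdots<\norm{L_n}\le 1$ of Condition~\ref{eq:chained-cascade-conditions}; alternatively this limit is immediate from \eqref{eq:thrm-asymptotic-relative-error}. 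I do not expect a genuine obstacle here: the only place requiring any care is the first step --- confirming that the cascade Koopman operator acts on a thin observable exactly as the component operator acts on $\psi_{i,s_i}$ composed with the projection $\Pi_i$ --- after which the estimate is simply inherited from Theorem~\ref{thrm:linear-asymptotic-equivalence}.
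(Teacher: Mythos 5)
Your proposal is correct and follows essentially the same route as the paper's proof: both rewrite $\psi_{(0,\dots,0,s_i,0,\dots,0)} = \psi_{i,s_i}\circ\Pi_i$, exploit the linearity of the coordinate functional $\psi_{i,s_i}$ to combine the two Koopman terms into a single argument, apply the operator-norm bound $\abs{\psi_{i,s_i}(v)}\le\norm{\psi_{i,s_i}}\norm{v}$, and then cite Theorem~\ref{thrm:linear-asymptotic-equivalence} for both the pointwise bound and the vanishing relative error. The only cosmetic difference is that the paper adds and subtracts the nominal term rather than merging arguments, but the underlying linearity step is identical.
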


\begin{proof}
See section \ref{sec:principal-eigenfunction-perturbation-proof} below.
\end{proof}

\begin{remark}
Recall that $\psi_{(0,\dots,0,s_i,0,\dots,0)}(x_1,\dots,x_n) = (\psi_{i,s_i}\circ \Pi_i)(x_1,\dots,x_n)$. Due to the definitions of $\msf{Nom}$ and $\msf{pert}$, 
	\begin{align*}
	\big(\mc U_{\msf{Nom}}^{\circ t}\psi_{(0,\dots,0,s_i,0,\dots,0)}\big)\circ\msf{pert}(x_1,\dots,x_{n})
	&= (\psi_{i,s_i}\circ\Pi_i \circ \msf{Nom}^{\circ t})\circ \msf{pert}(x_1,\dots,x_n) \\
	&= (\psi_{i,s_i}\circ\msf{Nom}_{i}^{\circ t}) \circ\msf{pert}_{i}(x_1,\dots,x_i) \\
	&= (\mc U_{\msf{Nom}_{i}}^{\circ t}\psi_{i,s_i}) \circ \msf{pert}_{i}(x_1,\dots,x_i) \\
	&= (\lambda_{i,s_i}^{t}\psi_{i,s_i}) \circ \msf{pert}_{i}(x_1,\dots,x_i) \\
	&= \lambda_{i,s_i}^{t} \psi_{(0,\dots,0,s_i,0,\dots,0)} \circ\msf{pert}(x_1,\dots,x_{n})
	\end{align*}
\end{remark}

\begin{corollary}\label{cor:gla-eigenfunctions}
Fix $i \in 1,\dots, n$, and let $\lambda_{i,s_i} \in \sigma(L_i)$. Then $\psi_{(0,\dots, 0,s_i,0,\dots,0)} \circ \msf{pert}$ is an eigenfunction of $\mc U_{\msf{Lin}}$ at eigenvalue $\lambda_{i,s_i}$.
\end{corollary}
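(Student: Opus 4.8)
Proof proposal for Corollary \ref{cor:gla-eigenfunctions}.

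\emph{Overall plan.} I would upgrade the asymptotic statement of Theorem~\ref{thrm:perturbation-principal-eigenfunctions} to an exact one by showing that $\msf{pert}$ is not merely an asymptotic but an \emph{exact} conjugacy from the linear cascade to the nominal system, i.e. $\msf{pert}\circ\msf{Lin}=\msf{Nom}\circ\msf{pert}$. Write $\phi:=\psi_{(0,\dots,0,s_i,0,\dots,0)}\circ\msf{pert}=\psi_{s_i\hat e_{n,i}}\circ\msf{pert}$, which by \eqref{eq:Nom-principal-eigenfunctions} and \eqref{eq:perturbation-function} is the observable $\phi(x_1,\dots,x_n)=\psi_{i,s_i}\big(\msf{pert}_i(x_1,\dots,x_i)\big)$ whose eigenfunction property is asserted. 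Granting the conjugacy, iterating it gives $\msf{pert}\circ\msf{Lin}^{\circ t}=\msf{Nom}^{\circ t}\circ\msf{pert}$ for all $t$, and the corollary is then immediate: since $\mc U_{\msf{Lin}}$ is the composition operator of $\msf{Lin}$,
\[
	\mc U_{\msf{Lin}}\phi=\phi\circ\msf{Lin}
	=\psi_{s_i\hat e_{n,i}}\circ(\msf{pert}\circ\msf{Lin})
	=\psi_{s_i\hat e_{n,i}}\circ(\msf{Nom}\circ\msf{pert})
	=\big(\mc U_{\msf{Nom}}\psi_{s_i\hat e_{n,i}}\big)\circ\msf{pert}
	=\lambda_{i,s_i}\,\psi_{s_i\hat e_{n,i}}\circ\msf{pert}
	=\lambda_{i,s_i}\,\phi,
\]
where the equality $\big(\mc U_{\msf{Nom}}\psi_{s_i\hat e_{n,i}}\big)\circ\msf{pert}=\lambda_{i,s_i}\,\psi_{s_i\hat e_{n,i}}\circ\msf{pert}$ is the eigenfunction property of $\psi_{s_i\hat e_{n,i}}$ for $\mc U_{\msf{Nom}}$ already recorded in the remark preceding the corollary (the case $t=1$). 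Hence $\phi$ is an eigenfunction of $\mc U_{\msf{Lin}}$ at $\lambda_{i,s_i}$.

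\emph{Proving the conjugacy.} It suffices to verify the componentwise identity $\msf{pert}_i\big(\msf{Lin}_{1:i}(x)\big)=L_i\,\msf{pert}_i(x_1,\dots,x_i)$ by induction on $i$. The base case $i=1$ is trivial, since $\msf{pert}_1=\mathrm{id}$ and $\Pi_1\circ\msf{Lin}=L_1$. For $i\geq 2$ I would substitute the chained dynamics \eqref{eq:linear-chained-cascade} and the recursion \eqref{eq:linear-perturbation-function} into the left-hand side and use the inductive hypothesis $\msf{pert}_j(\msf{Lin}_{1:j}(x))=L_j\,\msf{pert}_j(x_1,\dots,x_j)$ for $j<i$; after cancelling the common term $L_ix_i$, the two sides differ by exactly
\[
	C_{i,i-1}x_{i-1}-\sum_{j=1}^{i-1}(-1)^{i-1-j}\big(L_iD_{i,j}-D_{i,j}L_j\big)\msf{pert}_j(x_1,\dots,x_j).
\]
The Sylvester-type difference collapses: passing to the eigenbases gives $V_i^{-1}\big(L_iD_{i,j}-D_{i,j}L_j\big)V_j=\tilde C_{i,j}-\Lambda_i^{-1}\tilde C_{i,j}\Lambda_j$, whose $(\ell,m)$ entry is $[\tilde C_{i,j}]_{\ell,m}\big(1-\lambda_{j,m}/\lambda_{i,\ell}\big)=[V_i^{-1}C_{i,i-1}D_{i-1,j}V_j]_{\ell,m}$ by the very definition of $\tilde C_{i,j}$, so $L_iD_{i,j}-D_{i,j}L_j=C_{i,i-1}D_{i-1,j}$. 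Feeding this back and using the $(i-1)$-st instance of \eqref{eq:linear-perturbation-function}, rearranged (with $D_{i-1,i-1}=I_{d_{i-1}}$) as $x_{i-1}=\sum_{j=1}^{i-1}(-1)^{i-1-j}D_{i-1,j}\msf{pert}_j(x_1,\dots,x_j)$, makes the displayed difference vanish. Equivalently — and more economically — one can simply quote the closed-form expression for $\mc L_i^{\circ t}$ obtained in the proof of Lemma~\ref{lemma:level-i-solution-with-perturbation-terms} (the same computation that produces the error bound \eqref{eq:linear-cascade-absolute-error}) and read the conjugacy off of it directly.

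\emph{Where the difficulty lies.} The only non-formal ingredient is this conjugacy identity, i.e. the exact cancellation of the coupling cross-terms; it is forced by the resolvent/Sylvester factor $\big(1-\lambda_{j,m}/\lambda_{i,\ell}\big)^{-1}$ built into $\tilde C_{i,j}$, which is precisely where the disjoint-spectrum hypothesis of Condition~\ref{eq:chained-cascade-conditions}(ii) enters. Since this computation has already been carried out in establishing Theorem~\ref{thrm:linear-asymptotic-equivalence} and Lemma~\ref{lemma:level-i-solution-with-perturbation-terms}, it can be invoked rather than repeated, and everything else in the argument is bookkeeping. It is worth noting why Theorem~\ref{thrm:perturbation-principal-eigenfunctions} alone does \emph{not} yield the corollary: its error term decays like $\max_{j<i}\norm{L_j}^t=\norm{L_{i-1}}^t$, which need not be dominated by $\abs{\lambda_{i,s_i}}^t$ when $\lambda_{i,s_i}$ is a sub-dominant eigenvalue of $L_i$, so the naive rigidity argument ``error $=o(\text{geometric})\Rightarrow$ error $\equiv 0$'' does not close; the exact conjugacy is genuinely needed.
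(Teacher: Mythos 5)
Your proof is correct, and it takes a genuinely different route from the paper. The paper proves the corollary by invoking the GLA (generalized Laplace average) theorem: it forms the Ces\`aro--Laplace average $N^{-1}\sum_{t=0}^{N-1}\lambda_{i,s_i}^{-t}\mc U_{\msf{Lin}}^{\circ t}\psi_{s_i\hat e_{n,i}}$, uses the zero-relative-error estimate \eqref{eq:thrm-linear-principal-eigenfunction-0-asymptotic-relative-error} to show it converges to $\psi_{s_i\hat e_{n,i}}\circ\msf{pert}$, and concludes via the GLA machinery --- but it carries this out explicitly only for peripheral eigenvalues $\abs{\lambda_{i,s_i}}=\norm{L_i}$, handling the sub-dominant case with a one-line sketch about replacing $\mc U_{\msf{Lin}}$ by $\mc U_{\msf{Lin}}(I-P)$. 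You instead prove the exact conjugacy $\msf{pert}\circ\msf{Lin}=\msf{Nom}\circ\msf{pert}$ and read the eigenfunction identity off of it directly. Both the Sylvester computation $L_iD_{i,j}-D_{i,j}L_j=C_{i,i-1}D_{i-1,j}$ and the "economical" alternative of deducing the conjugacy from the closed form in Lemma~\ref{lemma:level-i-solution-with-perturbation-terms} (using the sign cancellation $(-1)^{i-j}+(-1)^{i-1-j}=0$ together with the recursion \eqref{eq:pert-iminusj}) are correct and complete. Your approach has a real advantage: it handles every $\lambda_{i,s_i}\in\sigma(L_i)$ uniformly, whereas the paper's averaging argument requires the extra projection step for interior spectrum, and your closing observation about why the asymptotic bound of Theorem~\ref{thrm:perturbation-principal-eigenfunctions} alone cannot close the argument for $\abs{\lambda_{i,s_i}}<\norm{L_{i-1}}$ is exactly right. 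The paper's remark after the corollary anticipates a "direct computation" route but calls it "more involved" because $\msf{pert}$ is a product of $n-1$ block-triangular matrices; your argument shows that the inductive/Sylvester formulation sidesteps that expansion entirely and is in fact shorter and cleaner than either the paper's two-layer worked example or the GLA proof.
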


\begin{proof}
This is a straight forward application of the GLA theorem \cite{Mohr:2014wm}. We only show it for a peripheral eigenvalue ($\abs{\lambda_{i,s_i}} = \norm{L_i}$). This is accomplished by showing that a Laplace average of $\psi_{(0,\dots,0,s_i,0,\dots,0)}$ converges to $\psi_{(0,\dots,0,s_i,0,\dots,0)}\circ \msf{pert}$. 

Let $\hat e_{n,i}$ be the $i$-th canonical basis vector of length $n$ and write $\psi_{s_i\hat e_{n,i}} = \psi_{(0,\dots,0,s_i,0,\dots,0)}$. Form the Laplace average,
	\begin{align*}
	\frac{1}{N} \sum_{t=0}^{N-1} \lambda_{i,s_i}^{-t} \mc U_{\msf{Lin}}^{\circ t}\psi_{s_i\hat e_{n,i}} 
	&= \frac{1}{N} \sum_{t=0}^{N-1} \lambda_{i,s_i}^{-t} \left[ (\mc U_{\msf{Nom}}^{\circ t}\psi_{s_i\hat e_{n,i}}) \circ \msf{pert} + \mc U_{\msf{Lin}}^{\circ t}\psi_{s_i\hat e_{n,i}} - (\mc U_{\msf{Nom}}^{\circ t}\psi_{s_i\hat e_{n,i}}) \circ \msf{pert} \right] \\
	&= \frac{1}{N} \sum_{t=0}^{N-1} (\psi_{s_i\hat e_{n,i}}\circ \msf{pert}) + \lambda_{i,s_i}^{-t} \left( \mc U_{\msf{Lin}}^{\circ t}\psi_{s_i\hat e_{n,i}} - (\mc U_{\msf{Nom}}^{\circ t}\psi_{s_i\hat e_{n,i}}) \circ \msf{pert}\right),
	\end{align*}
where we have used $\mc U_{\msf{Nom}}^{\circ t}\psi_{s_i\hat e_{n,i}} = \mc \lambda_{i,s_i}^t\psi_{s_i\hat e_{n,i}}$. Then 
	\begin{align*}
	\norm*{\frac{1}{N} \sum_{t=0}^{N-1} \lambda_{i,s_i}^{-t} \mc U_{\msf{Lin}}^{\circ t}\psi_{s_i\hat e_{n,i}} - \psi_{s_i\hat e_{n,i}}\circ \msf{pert} }
	&\leq \frac{1}{N} \sum_{t=0}^{N-1} \norm*{\lambda_{i,s_i}^{-t} \left( \mc U_{\msf{Lin}}^{\circ t}\psi_{s_i\hat e_{n,i}} - (\mc U_{\msf{Nom}}^{\circ t}\psi_{s_i\hat e_{n,i}}) \circ \msf{pert}\right) } \\
	&= \frac{1}{N} \sum_{t=0}^{N-1} \frac{\norm*{\left( \mc U_{\msf{Lin}}^{\circ t}\psi_{s_i\hat e_{n,i}} - (\mc U_{\msf{Nom}}^{\circ t}\psi_{s_i\hat e_{n,i}}) \circ \msf{pert}\right)} }{\norm{L_i}^t}.
	\end{align*}
It is clear from \eqref{eq:thrm-linear-principal-eigenfunction-0-asymptotic-relative-error}, that the right-hand side converges to 0 as $N\to\infty$.

In the general case, to project onto the $\lambda$ eigenspace, $\mc U_{\msf{Lin}}$ in the above average is replaced with $\mc U_{\msf{Lin}} (I - P)$ where $P$ is the projection onto the direct sum of $\mu$-eigenspace, for $\mu$ satisfying $\abs{\mu} > \abs{\lambda}$.
\end{proof}

\begin{remark}
It can be shown that $\psi_{s_i\hat e_{n,i}}\circ \msf{pert}$ is an eigenfunction of $\mc U_{\msf{Lin}}$, without appeal to the GLA theorem, by direct computation, but it is more involved since $\msf{pert}$ for an $n$-layer cascade consists of a product of $n-1$ lower block triangular matrices. Example \ref{ex:2-layer-eigenfunction-computation} below shows the computation for just a two layer system.
\end{remark}

\begin{example}\label{ex:2-layer-eigenfunction-computation}
We demonstrate that the result of corollary \ref{cor:gla-eigenfunctions} explicitly for a 2 layer chained cascade;
	\begin{align*}
	x_1(t+1) &= L_1 x_1(t) \\
	x_2(t+1) &= L_2 x_2(t) + C_{2,1} x_1(t) .
	\end{align*}
Using theorem \ref{thrm:linear-asymptotic-equivalence},
	\begin{align*}
	\msf{pert}_1(x_1,x_2) &= \begin{bmatrix} I_{d_1} & \mbf 0\end{bmatrix}\begin{bmatrix} x_1 \\ x_2 \end{bmatrix} \\
	\msf{pert}_2(x_1,x_2) &= \begin{bmatrix} D_{2,1} &  I_{d_2} \end{bmatrix}\begin{bmatrix} x_1 \\ x_2 \end{bmatrix} 
	\end{align*}
Therefore,
	\begin{equation}
	\msf{pert}(x_1,x_2) 	
	=
		\begin{bmatrix} 
		\msf{pert}_1(x_1) \\
		\msf{pert}_2(x_1,x_2)
		\end{bmatrix}
	=
    	\begin{bmatrix} 
    		I_{d_1} & \mbf 0 \\
    		D_{2,1} &  I_{d_2}
    	\end{bmatrix}
		\begin{bmatrix} x_1 \\ x_2 \end{bmatrix}
	\end{equation}
Furthermore, we have the principal eigenfunction for the second system
	\begin{equation}
	\psi_{(0,s_2)}(x_1,x_2) = 
	\begin{bmatrix} 
		0 & \hat e_{d_2,s_2}^\ast V_{2}^{-1}
	\end{bmatrix}
	\begin{bmatrix} x_1 \\ x_2 \end{bmatrix}.
	\end{equation}
By lemma \ref{lemma:level-i-solution-with-perturbation-terms}
	\begin{equation}
	\msf{Lin}^{\circ t}(x_1,x_2) =
	\begin{bmatrix}
		L_1^t \msf{pert}_1(x_1) \\
		L_2^t \msf{pert}_2(x_1,x_2) - D_{2,1}L_1^t \msf{pert}_1(x_1)
	\end{bmatrix}.
	\end{equation}
We can write $\msf{Lin}^{\circ t}(x_1,x_2)$ as
	\begin{align*}
	\msf{Lin}^{\circ t}(x_1,x_2) &=
	\begin{bmatrix}
		I_{d_1} & \mbf 0 \\
		- D_{2,1} & I_{d_2}
	\end{bmatrix}
	\begin{bmatrix}
		L_1^t &  \\
		  & L_2^t
	\end{bmatrix}
	\begin{bmatrix} 
		\msf{pert}_{1}(x_1) \\
		\msf{pert}_{2}(x_2)
	\end{bmatrix}.
	\end{align*}
Our goal is to show that $\psi_{(0,s_2)} \circ \msf{pert}$ is an eigenfunction at eigenvalue $\lambda_{2,s_2}$ for $\mc{U}_{\msf{Lin}}^{\circ t}$. To this end we compute
	\begin{align*}
	\mc U_{\msf{Lin}}^{\circ t}(\psi_{(0,s_2)} \circ \msf{pert})(x_1,x_2)
	&= \psi_{(0,s_2)} \circ \msf{pert} \circ \msf{Lin}^{\circ t}(x_1,x_2) \\
	&= 	\underbrace{\begin{bmatrix} 
			0 & \hat e_{d_2,s_2}^\ast V_{2}^{-1}
		\end{bmatrix}}_{\psi_{(0,s_2)}}
		\underbrace{\begin{bmatrix} 
			I_{d_1} & \mbf 0 \\
			D_{2,1} &  I_{d_2}
		\end{bmatrix}}_{\msf{pert}}
		\underbrace{\begin{bmatrix}
		I_{d_1} & \mbf 0 \\
		- D_{2,1} & I_{d_2}
    	\end{bmatrix}
    	\begin{bmatrix}
    		L_1^t &  \\
    		  & L_2^t
    	\end{bmatrix}
    	\begin{bmatrix} 
    		\msf{pert}_{1}(x_1) \\
    		\msf{pert}_{2}(x_2)
    	\end{bmatrix}}_{\msf{Lin}^{\circ t}(x_1,x_2)}
	\\
	&= 	\begin{bmatrix} 
			0 & \hat e_{d_2,s_2}^\ast V_{2}^{-1}
		\end{bmatrix}
    	\begin{bmatrix}
    		L_1^t &  \\
    		  & L_2^t
    	\end{bmatrix}
    	\begin{bmatrix} 
    		\msf{pert}_{1}(x_1) \\
    		\msf{pert}_{2}(x_2)
    	\end{bmatrix}
	\\
	&= 	\hat e_{d_2,s_2}^\ast V_{2}^{-1}
		L_2^t
		\msf{pert}_{2}(x_2)
	\\
	&= 	\hat e_{d_2,s_2}^\ast  \Lambda_2^t
		V_2^{-1}
		\msf{pert}_{2}(x_2)
	\\
	&= 	\lambda_{2,s_2}^{t}\hat e_{d_2,s_2}^\ast 
		V_2^{-1}
		\msf{pert}_{2}(x_2)
	\\
	&= 	\lambda_{2,s_2}^t
		\begin{bmatrix} 
			0 & \hat e_{d_2,s_2}^\ast V_{2}^{-1}
		\end{bmatrix}
    	\begin{bmatrix} 
    		\msf{pert}_{1}(x_1) \\
    		\msf{pert}_{2}(x_2)
    	\end{bmatrix}
	\\
	&= \lambda_{2,s_2}^t (\psi_{(0,s_2)} \circ \msf{pert})(x_1,x_2).
	\end{align*}
This completes the example.
\qed\end{example}

The asymptotic equivalence for the chained, linear cascades can be pushed to chained, nonlinear cascades with asymptotically stable fixed points through the use of a topological conjugacy. Let $\tau = (\tau_1,\dots,\tau_n) : \C^{d_1}\times \cdots \times \C^{d_n} \to \C^{d_1}\times \cdots \times \C^{d_n}$ be a topological conjugacy from the linear system to the nonlinear system and $\tau^{-1} = (\rho_1, \dots, \rho_n) : \C^{d_1}\times \cdots \times \C^{d_n} \to \C^{d_1}\times \cdots \times \C^{d_n}$ its inverse. In general, both $\tau_i$ and $\rho_i$ are maps from $\C^{d_1}\times \cdots \times \C^{d_n} \to \C^{d_i}$. The topological conjugacy makes the following diagram commute:

\begin{equation}
\begin{tikzcd}
\C^{d_1}\times\cdots\times\C^{d_n} \arrow{d}{\tau} \arrow{r}{\msf{Lin}^{\circ t}} & \C^{d_1}\times\cdots\times\C^{d_n} \arrow{d}{\tau} \\
\C^{d_1}\times\cdots\times\C^{d_n} \arrow{r}{\msf{NonLin}^{\circ t}} & \C^{d_1}\times\cdots\times\C^{d_n}
\end{tikzcd}
\end{equation}

\begin{theorem}[Asymptotic equivalence for nonlinear cascaded systems]\label{thrm:nonlinear-chained-cascade-asymptotic-equivalence}
Let the conditions of Theorem \ref{thrm:linear-asymptotic-equivalence} be satisfied and let $\tau = (\tau_1,\dots, \tau_n) : \C^{d_1}\times \cdots \times \C^{d_n} \to \C^{d_1}\times \cdots \times \C^{d_n}$ be a topological conjugacy satisfying $\msf{Lin} = \tau^{-1} \circ \msf{NonLin} \circ \tau$. Then, for each initial condition $(y_1,\dots, y_n)$ for the nonlinear system, $\msf{NonLin}$ is asymptotically equivalent to $\tau \circ \msf{Nom} \circ \tau^{-1}$ with the perturbation function $\tau \circ \msf{pert}\circ \tau^{-1}$:
	\begin{align}\label{eq:nonlinear-asymptotic-equivalence}
	\lim_{t\to\infty}\norm[\Big]{ \msf{NonLin}^{\circ t}(y_1,\dots,y_n) - \left( \tau \circ \msf{Nom} \circ \tau^{-1}\right)^{\circ t}(\tau \circ \msf{pert}\circ \tau^{-1})(y_1,\dots,y_n)  } = 0,
	\end{align}
where $\msf{pert}$ is given by \eqref{eq:perturbation-function}.
\end{theorem}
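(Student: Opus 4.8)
The plan is to transport the linear result, Corollary~\ref{cor:asymptotic-equivalence-cascade-system-absolute-error}, across the homeomorphism $\tau$, so that the only genuinely analytic input beyond that corollary is an application of uniform continuity. First I would record the two semigroup identities coming from $\tau^{-1}\circ\tau = \mathrm{id}$, each proved by a one-line induction on $t$: iterating $\msf{NonLin} = \tau\circ\msf{Lin}\circ\tau^{-1}$ gives
\[
\msf{NonLin}^{\circ t} = \tau\circ\msf{Lin}^{\circ t}\circ\tau^{-1},
\qquad
\big(\tau\circ\msf{Nom}\circ\tau^{-1}\big)^{\circ t} = \tau\circ\msf{Nom}^{\circ t}\circ\tau^{-1},
\]
the inner $\tau^{-1}\circ\tau$ pairs cancelling at every step. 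Composing the second identity on the right with the proposed nonlinear perturbation function $\tau\circ\msf{pert}\circ\tau^{-1}$ collapses the flanking $\tau^{-1}$ and $\tau$, leaving $\tau\circ\msf{Nom}^{\circ t}\circ\msf{pert}\circ\tau^{-1}$. Hence, setting $x := \tau^{-1}(y_1,\dots,y_n)$, the quantity inside the limit in \eqref{eq:nonlinear-asymptotic-equivalence} is exactly
\[
\norm[\big]{\,\tau\big(\msf{Lin}^{\circ t}(x)\big) - \tau\big(\msf{Nom}^{\circ t}(\msf{pert}(x))\big)\,}_{\times},
\]
so the nonlinear statement reduces to showing that applying $\tau$ to the two linear orbits does not destroy the vanishing of their difference.

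Second, I would bound the two linear orbits. Corollary~\ref{cor:asymptotic-equivalence-cascade-system-absolute-error} already gives $\norm{\msf{Lin}^{\circ t}(x) - \msf{Nom}^{\circ t}(\msf{pert}(x))}_{\times}\to 0$. Moreover $\msf{Nom}^{\circ t}(\msf{pert}(x)) = \big(L_1^t\msf{pert}_1(x_1),\dots,L_n^t\msf{pert}_n(x_1,\dots,x_n)\big)$ is bounded in $t$ because $\norm{L_i}\le 1$ under Condition~\ref{eq:chained-cascade-conditions}, and $\msf{Lin}^{\circ t}(x)$ is then bounded as well, since it differs from that orbit by a null sequence. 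Consequently both orbits remain, for all $t$, inside a common closed ball $K\subset\C^{d_1}\times\cdots\times\C^{d_n}$, which is compact.

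Third, I would close by uniform continuity. By the Heine--Cantor theorem the restriction $\tau|_K$ is uniformly continuous, so for each $\eps>0$ there is $\del>0$ such that $\norm{u-v}_{\times}<\del$ forces $\norm{\tau(u)-\tau(v)}_{\times}<\eps$ for all $u,v\in K$. Taking $t$ large enough that $\norm{\msf{Lin}^{\circ t}(x)-\msf{Nom}^{\circ t}(\msf{pert}(x))}_{\times}<\del$, the displayed expression above drops below $\eps$, which establishes \eqref{eq:nonlinear-asymptotic-equivalence}. (En route one also gets that the relevant fixed point of $\msf{NonLin}$ is $\tau(0)$, since $\msf{NonLin}\circ\tau = \tau\circ\msf{Lin}$ and $\msf{Lin}(0)=0$.)

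The main obstacle --- and the only step where anything more than bookkeeping happens --- is this last one: continuity of $\tau$ is merely pointwise, whereas Corollary~\ref{cor:asymptotic-equivalence-cascade-system-absolute-error} tells us only that the \emph{difference} of the two linear orbits tends to $0$, not that either orbit converges; so pushing that difference through $\tau$ requires upgrading to uniform continuity on a compact set, which is exactly why the boundedness furnished by $\norm{L_i}\le 1$ is needed. It is also worth flagging that no decay \emph{rate} is claimed here, in contrast with the zero-asymptotic-relative-error statements for the linear cascade, because a topological conjugacy can distort rates arbitrarily --- only the plain asymptotic equivalence \eqref{eq:nonlinear-asymptotic-equivalence} transports.
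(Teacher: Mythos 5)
Your proof is correct and follows essentially the same route as the paper: rewrite $\msf{NonLin}^{\circ t}$ as $\tau\circ\msf{Lin}^{\circ t}\circ\tau^{-1}$ and $(\tau\circ\msf{Nom}\circ\tau^{-1})^{\circ t}\circ(\tau\circ\msf{pert}\circ\tau^{-1})$ as $\tau\circ\msf{Nom}^{\circ t}\circ\msf{pert}\circ\tau^{-1}$, reduce the displayed quantity to $\norm{\tau(\msf{Lin}^{\circ t}(x)) - \tau(\msf{Nom}^{\circ t}(\msf{pert}(x)))}_{\times}$ with $x=\tau^{-1}(y)$, and then transport Corollary~\ref{cor:asymptotic-equivalence-cascade-system-absolute-error} through uniform continuity of $\tau$ on a compact set containing both linear orbits. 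If anything your justification of compactness (boundedness from $\norm{L_i}\le 1$ together with the null difference) is slightly more careful than the paper's, which asserts without elaboration that the orbits eventually lie in the product of closed unit balls, a claim that is only literally true when $\norm{L_n}<1$ strictly.
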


\begin{proof}
See section \ref{sec:nonlinear-chained-cascade-asymptotic-equivalence-proof} for the proof.
\end{proof}

\begin{remark}
Recall that $\msf{Nom}$ is the nominal decoupled linear system and $\tau$ is a map from the linear to the nonlinear system. Then $\tau \circ \msf{Nom} \circ \tau^{-1}$ is a map on the same state space as the nonlinear system $\msf{NonLin}$ and can be thought of as the nominal nonlinear system. Furthermore, since $\msf{pert}$ is the perturbation function for the initial conditions of the linear system, then $\tau \circ \msf{pert}\circ \tau^{-1}$ is the perturbation function for the nonlinear system's initial conditions.
\end{remark}

\begin{theorem}[Perturbation of eigenfunctions for nonlinear cascades]\label{thrm:peturbation-nonlinear-principal-eigenfunctions}
Let the conditions of Theorem \ref{thrm:nonlinear-chained-cascade-asymptotic-equivalence} be satisfied. For any $\vec y = (y_1,\dots,y_n) \in \C^{d_1}\times\cdots\times\C^{d_n}$,
	\begin{align}
	\lim_{t\to\infty} \frac{\abs*{ \mc U_{\msf{NonLin}}^{\circ t} (\psi_{(0,\dots,0,s_i,0,\dots, 0)} \circ \tau^{-1})(\vec y) - \mc U_{\tau\circ\msf{Nom}\circ\tau^{-1}}^{\circ t} (\psi_{(0,\dots,0,s_i,0,\dots, 0)} \circ \tau^{-1})((\tau\circ\msf{pert}\circ\tau^{-1})(\vec y)) }}{\norm{L_i}^t} = 0.
	\end{align}
\end{theorem}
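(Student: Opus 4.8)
The plan is to reduce the assertion to the already-established linear estimate, Theorem~\ref{thrm:perturbation-principal-eigenfunctions}, by a change of variables through the conjugacy $\tau$. Write $\phi := \psi_{(0,\dots,0,s_i,0,\dots,0)} = \psi_{i,s_i}\circ\Pi_i$ for the observable on the linear state space, fix $\vec y = (y_1,\dots,y_n)$, and set $\vec x := \tau^{-1}(\vec y)$. First I would record that iterating the commuting square gives $\msf{NonLin}^{\circ t} = \tau\circ\msf{Lin}^{\circ t}\circ\tau^{-1}$ for every $t\in\N$, and likewise $(\tau\circ\msf{Nom}\circ\tau^{-1})^{\circ t} = \tau\circ\msf{Nom}^{\circ t}\circ\tau^{-1}$, the intermediate factors $\tau^{-1}\circ\tau$ telescoping away.

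Next I would compute both terms in the numerator of the claimed fraction explicitly. By the definition of the Koopman operator and the first identity, the $\tau$'s cancel exactly:
\begin{equation*}
\mc U_{\msf{NonLin}}^{\circ t}(\phi\circ\tau^{-1})(\vec y) = \phi\bigl(\tau^{-1}(\msf{NonLin}^{\circ t}(\vec y))\bigr) = \phi\bigl(\msf{Lin}^{\circ t}(\vec x)\bigr) = (\mc U_{\msf{Lin}}^{\circ t}\phi)(\vec x).
\end{equation*}
For the second term, since $\tau^{-1}\bigl((\tau\circ\msf{pert}\circ\tau^{-1})(\vec y)\bigr) = \msf{pert}(\vec x)$,
\begin{equation*}
\mc U_{\tau\circ\msf{Nom}\circ\tau^{-1}}^{\circ t}(\phi\circ\tau^{-1})\bigl((\tau\circ\msf{pert}\circ\tau^{-1})(\vec y)\bigr) = \phi\bigl(\msf{Nom}^{\circ t}(\msf{pert}(\vec x))\bigr) = (\mc U_{\msf{Nom}}^{\circ t}\phi)\circ\msf{pert}(\vec x).
\end{equation*}
Subtracting, the numerator in the claim is exactly $\abs*{(\mc U_{\msf{Lin}}^{\circ t}\phi)(\vec x) - (\mc U_{\msf{Nom}}^{\circ t}\phi)\circ\msf{pert}(\vec x)}$, i.e.\ the quantity controlled by Theorem~\ref{thrm:perturbation-principal-eigenfunctions} evaluated at the point $\tau^{-1}(\vec y)$. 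Dividing by $\norm{L_i}^t$ and invoking \eqref{eq:thrm-linear-principal-eigenfunction-0-asymptotic-relative-error} with $(x_1,\dots,x_n)$ there replaced by $\tau^{-1}(\vec y)$ yields the claimed limit $0$; if one also wants a non-asymptotic bound, \eqref{eq:thrm-linear-principal-eigenfunction-proportionality} transfers verbatim, giving $\norm{\psi_{i,s_i}}\sum_{j=1}^{i-1}\norm{D_{i,j}}\,\norm{L_j^t\,\msf{pert}_j(\rho_1(\vec y),\dots,\rho_j(\vec y))}_{\C^{d_j}}$ where $\tau^{-1} = (\rho_1,\dots,\rho_n)$.

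There is essentially no hard step: the content is just the bookkeeping observation that on the level of observables the conjugacy cancels itself, in contrast to Theorem~\ref{thrm:nonlinear-chained-cascade-asymptotic-equivalence} where continuity of $\tau$ is genuinely needed to push the orbit convergence forward. The one point deserving a word of care is that $\phi\circ\tau^{-1}$ must lie in the algebra of observables on which $\mc U_{\msf{NonLin}}$ acts; since $\phi$ is continuous and $\tau^{-1}$ is a homeomorphism, this holds as soon as that algebra is taken to contain $\mc A\circ\tau^{-1}$, which is the natural choice, and \emph{no} regularity of $\tau$ beyond being a conjugating homeomorphism enters the argument.
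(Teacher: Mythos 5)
Your proof is correct and follows essentially the same route as the paper's: fix $\vec y$, set $\vec x = \tau^{-1}(\vec y)$, use the conjugacy identities $\msf{NonLin}^{\circ t} = \tau\circ\msf{Lin}^{\circ t}\circ\tau^{-1}$ and $(\tau\circ\msf{Nom}\circ\tau^{-1})^{\circ t} = \tau\circ\msf{Nom}^{\circ t}\circ\tau^{-1}$ to show the numerator equals $\abs{(\mc U_{\msf{Lin}}^{\circ t}\phi)(\vec x) - (\mc U_{\msf{Nom}}^{\circ t}\phi)(\msf{pert}(\vec x))}$, and then invoke Theorem~\ref{thrm:perturbation-principal-eigenfunctions}. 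Your closing remarks about the non-asymptotic bound and the need for the observable algebra to contain $\mc A\circ\tau^{-1}$ are correct refinements not made explicit in the paper, but the core argument is the same.
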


\begin{proof}
See section \ref{sec:perturbation-nonlinear-principal-eigenfunctions-proof}.
\end{proof}

\begin{remark}
It was shown in \cite{Budisic:2012cf} that if $\psi$ was an eigenfunction corresponding to the Koopman operator associated with the linearized system and $\tau$ was a topological conjugacy from the linear to the nonlinear system, then $\psi \circ \tau^{-1}$ was an eigenfunction of the Koopman operator associated with the nonlinear system. Recall that $\psi_{(0,\dots,0,s_i,0,\dots, 0)}$ is a principal eigenfunction for the Koopman operator $\mc U_{\msf{Nom}}$ associated with the nominal linear system. Then $\psi_{(0,\dots,0,s_i,0,\dots, 0)} \circ \tau^{-1}$ is a principal eigenfunction for the Koopman operator $\mc U_{\tau\circ\msf{Nom}\circ\tau^{-1}}$ associated with the nonlinear nominal system $\tau\circ\msf{Nom}\circ\tau^{-1}$.
\end{remark}

\begin{remark}
Theorem \ref{thrm:peturbation-nonlinear-principal-eigenfunctions} says that the action of the Koopman operator associated with the nonlinear cascade $\msf{NonLin}$ on the observable $\psi_{(0,\dots,0,s_i,0,\dots, 0)} \circ \tau^{-1}$ is asymptotically equivalent to the action of the Koopman operator associated with $\tau\circ\msf{Nom}\circ\tau^{-1}$ (the nominal nonlinear system) on $\psi_{(0,\dots,0,s_i,0,\dots, 0)} \circ \tau^{-1}$ but at a perturbed initial condition, but when evaluated at a perturbed initial condition $(\tau\circ\msf{pert}\circ\tau^{-1})(\vec y)$. 
\end{remark}

\begin{remark}
While the preceding results are proved for chained cascades, they should be easily extensible to the general cascade systems. The difference should only be in the exact form of the perturbation functions and the bounds. The asymptotic results should remain the same.
\end{remark}

\subsection{Numerical experiments.}
The results of theorem \ref{thrm:linear-asymptotic-equivalence} were confirmed with simulation. The simulation consisted of 7-layer\footnote{The choice of 7 layers was made merely for display purposes. Results for larger cascades have been confirmed as well.} linear chained, cascaded system with randomly generated dimensions for each system $i$. System matrices $L_i$ were randomly generated with entries uniformly in the interval $[-1,1]$ and then scaled to have $\norm{L_i} = (0.9)^{8-i}$ for $i=1,\dots, 7$. The coupling matrices $C_{i,i-1}$ were also randomly generated with entries uniformly in $[-1,1]$. Initial conditions for each system $i$ were randomly generated and scaled to have $\norm{x_i} = 1$.

Figures \ref{fig:linear-cascade-absolute-error} and \ref{fig:linear-cascade-relative-error} show the log absolute error and log relative errors of a typical run of the simulation. The black asterisks in fig. \ref{fig:linear-cascade-absolute-error} are the predicted upper bound \eqref{eq:thrm-asymptotic-proportionality}. The colored lines in each plot correspond to the log of the absolute error, $\log\left(\norm{\Pi_i\circ \msf{Lin}^{\circ t}(x_1,\dots, x_n) - \Pi_i\circ \msf{Nom}^{\circ t}(\msf{pert}(x_1,\dots, x_n)) }\right)$, between the linear and the nominal systems. Figure \ref{fig:linear-cascade-relative-error} shows the log of the relative error
	\begin{equation}\label{eq:log-relative-error}
	\log\left( \frac{\norm{\Pi_i\circ \msf{Lin}^{\circ t}(x_1,\dots, x_n) - \Pi_i\circ \msf{Nom}^{\circ t}(\msf{pert}(x_1,\dots, x_n)) }}{\norm{L_i}^t} \right).
	\end{equation}
As can be seen, the log absolute error and log relative error decrease linearly, confirming that the absolute error and relative error decrease exponentially fast to zero.

\begin{figure}[ht]
\begin{center}
\includegraphics[width=1\textwidth]{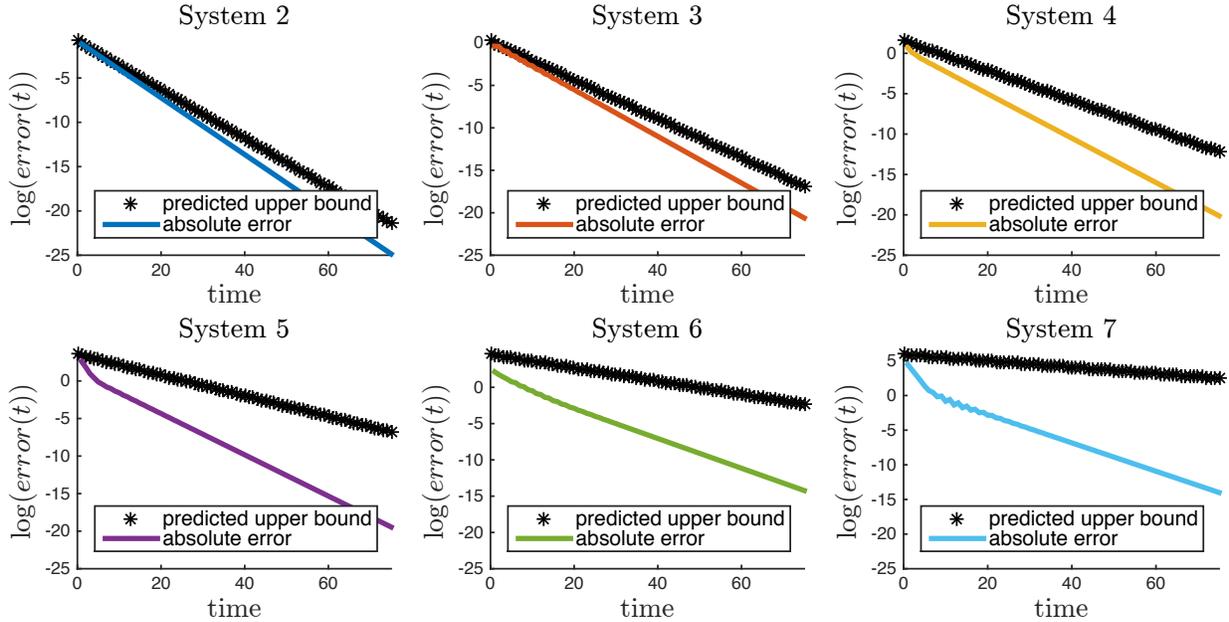}
\caption{Log absolute error between $\Pi_i\circ\msf{Lin}^{\circ t}$ and $\Pi_i\circ\msf{Nom}^{\circ t}$. The black asterisks ($\ast$) correspond to the upper bound given by \eqref{eq:thrm-asymptotic-proportionality}. The colored lines correspond to the absolute error given by the right hand side of \eqref{eq:linear-cascade-absolute-error}. Traces for system 1 are not plotted since by construction $\Pi_1\circ \msf{Lin}^{\circ t} = \Pi_1 \circ \msf{Nom}^{\circ t}$ for all $t$.}
\label{fig:linear-cascade-absolute-error}
\end{center}
\end{figure}

\begin{figure}[ht]
\begin{center}
\includegraphics[width=0.6\textwidth]{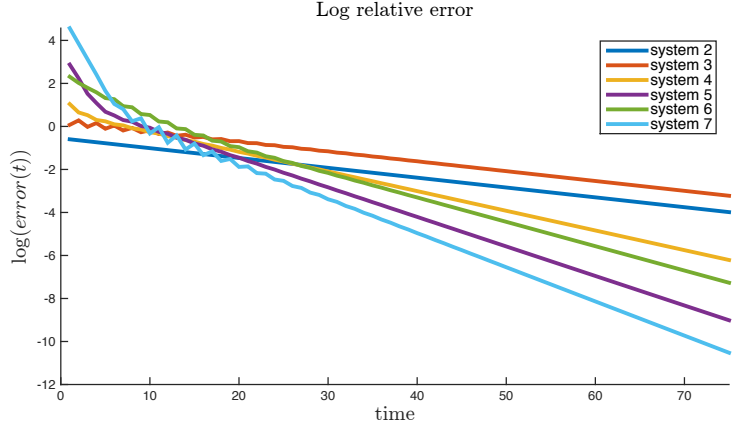}
\caption{Log relative error between $\Pi_i\circ\msf{Lin}^{\circ t}(x_1,\dots,x_n)$ and $\Pi_i\circ\msf{Nom}^{\circ t}(\msf{pert}(x_1,\dots, x_n))$. The colored lines correspond to the log relative error \eqref{eq:log-relative-error}.}
\label{fig:linear-cascade-relative-error}
\end{center}
\end{figure}
%
\section{Proofs of the main results}\label{sec:proofs}

\subsection{Asymptotic equivalence of linear, chained cascades.}
The first lemma gives the general solution for the $i$-th level of the chained linear cascade system
	
\begin{lemma}\label{lemma:general-solution}
For all $i=1,\dots, n$ and $t \geq 0$, denote by $x_i(t)$ the solution $\Pi_i \circ \msf{Lin}^{\circ t}(x_1,\dots,x_n)$ of the $i$-th level of \eqref{eq:linear-chained-cascade}. For $i\geq 2$, the general solution satisfies
	\begin{equation}\label{eq:linear-chained-cascade-general-solution}
	x_{i}(t) \equiv \Pi_i \circ \msf{Lin}^{\circ t}(x_1,\dots, x_n) = L_{i}^{t} x_i + L_{i}^{t-1} V_{i} \sum_{k=0}^{t-1} \Lambda_{i}^{-k} V_{i}^{-1} C_{i,i-1} x_{i-1}(k).
	\end{equation}
\end{lemma}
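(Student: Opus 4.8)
The plan is to treat the $i$-th line of \eqref{eq:linear-chained-cascade} as a one-step inhomogeneous linear recursion $x_i(t+1) = L_i x_i(t) + g_i(t)$, where the forcing term $g_i(t) := C_{i,i-1}\, x_{i-1}(t)$ is regarded as a \emph{known} sequence: it is determined entirely by the upstream level $i-1$, whose evolution does not depend on level $i$. For such a recursion the discrete variation-of-parameters (Duhamel) formula gives
\[
x_i(t) = L_i^{t} x_i(0) + \sum_{k=0}^{t-1} L_i^{\,t-1-k}\, g_i(k),
\]
with $x_i(0) = x_i$ the $i$-th component of the initial condition. The work is then (a) to establish this identity and (b) to rewrite the sum using the diagonalization $L_i = V_i \Lambda_i V_i^{-1}$.

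First I would prove the Duhamel formula by induction on $t$. The base case $t=0$ reads $x_i(0)=x_i$ with an empty sum, true by convention; the case $t=1$ gives $x_i(1)=L_i x_i + g_i(0) = L_i x_i + C_{i,i-1}x_{i-1}(0)$, matching the recursion. For the inductive step, assuming the formula at time $t$ and applying the recursion once more,
\[
x_i(t+1) = L_i x_i(t) + g_i(t) = L_i^{t+1} x_i(0) + \sum_{k=0}^{t-1} L_i^{\,t-k} g_i(k) + g_i(t) = L_i^{t+1} x_i(0) + \sum_{k=0}^{t} L_i^{\,t-k} g_i(k),
\]
using linearity of $L_i$ and $t-k = (t+1)-1-k$. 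That $\Pi_i\circ\msf{Lin}^{\circ t}$ really equals this $x_i(t)$ follows from the definition of the solution operator in Section \ref{subsec:solution-operator}, which gives $\Pi_i\circ\msf{Lin}^{\circ(t+1)} = L_i\circ\Pi_i\circ\msf{Lin}^{\circ t} + C_{i,i-1}\circ\Pi_{i-1}\circ\msf{Lin}^{\circ t}$ with $\Pi_{i-1}\circ\msf{Lin}^{\circ t}=x_{i-1}(t)$, so the same induction applies.

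Second, I would convert the Duhamel sum into the stated form. Since $\Lambda_i$ is diagonal and invertible, $\Lambda_i^{\,t-1-k} = \Lambda_i^{\,t-1}\Lambda_i^{-k}$, and hence
\[
L_i^{\,t-1-k} = V_i \Lambda_i^{\,t-1-k} V_i^{-1} = V_i \Lambda_i^{\,t-1} V_i^{-1} V_i \Lambda_i^{-k} V_i^{-1} = L_i^{\,t-1} V_i \Lambda_i^{-k} V_i^{-1}.
\]
Substituting this into $\sum_{k=0}^{t-1} L_i^{\,t-1-k} C_{i,i-1} x_{i-1}(k)$ and pulling the $k$-independent factor $L_i^{\,t-1}V_i$ out of the sum yields exactly $L_i^{\,t-1} V_i \sum_{k=0}^{t-1} \Lambda_i^{-k} V_i^{-1} C_{i,i-1} x_{i-1}(k)$, which together with the homogeneous term $L_i^t x_i$ is \eqref{eq:linear-chained-cascade-general-solution}.

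There is no serious obstacle: the argument is a routine induction plus linear-algebra bookkeeping. The only points needing care are the index arithmetic in the sums (shifting the $k$-range in the inductive step and matching the exponent $t-1-k$), the empty-sum convention in the $t=0$ case, and noting that invertibility of $L_i$ (Condition \ref{eq:chained-cascade-conditions}(i)) is what legitimizes writing $\Lambda_i^{-k}$ and the prefactor $L_i^{\,t-1}$ — the latter only ever multiplies an empty sum when $t=0$, so it is harmless there. Diagonalizability is used solely to express the solution in the $V_i,\Lambda_i$ form on which the later lemmas and the perturbation-function construction depend.
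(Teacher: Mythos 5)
Your argument is correct and follows essentially the same path as the paper: unroll the one-step recursion to obtain the discrete variation-of-parameters sum $x_i(t) = L_i^{t} x_i + \sum_{k=0}^{t-1} L_i^{t-1-k} C_{i,i-1} x_{i-1}(k)$, then factor $L_i^{t-1-k} = L_i^{t-1} V_i \Lambda_i^{-k} V_i^{-1}$ using the diagonalization. The only cosmetic difference is that you prove the Duhamel formula by explicit induction while the paper iterates the recursion in-line; the content is identical.
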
	

\begin{proof}
Repeatedly using \eqref{eq:linear-chained-cascade}, we have
	\begin{align*}
	x_{i}(t) 
	&= L_{i} x_{i}(t-1) + C_{i,i-1}x_{i-1}(t-1) \\
	&= L_{i} \left[ L_{i} x_{i}(t-2) + C_{i,i-1}x_{i-1}(t-2) \right] + C_{i,i-1}x_{i-1}(t-1) \\
	&= L_{i}^{2} x_{i}(t-2) + \left[ L_{i} C_{i,i-1}x_{i-1}(t-2) + C_{i,i-1}x_{i-1}(t-1) \right] \\
	&~~\vdots \\
	&= L_{i}^{t} x_{i}(0) + \left[ L_{i}^{t-1} C_{i,i-1} x_{i-1}(0) + L_{i}^{t-2} C_{i,i-1} x_{i-1}(1) + \cdots L_{i}^{1} C_{i,i-1} x_{t-2}(1) + C_{i,i+1} x_{i-1}(t-1)\right] \\
	&= L_{i}^{t} x_{i}(0) + \sum_{k=0}^{t-1} L_{i}^{t-1-k} C_{i,i-1} x_{i-1}(k) \\ 
	&= L_{i}^{t} x_{i}(0) + L_{i}^{t-1}\sum_{k=0}^{t-1} L_{i}^{-k} C_{i,i-1} x_{i-1}(k) .
	\end{align*}
Replacing $L_{i}^{-k}$ with $V_{i} \Lambda_{i}^{-k} V_{i}^{-1}$ in this final expression gives \eqref{eq:linear-chained-cascade-general-solution}.
\end{proof}
	
\begin{lemma}\label{lemma:perturbation-terms}
Assume Condition \ref{eq:chained-cascade-conditions} holds for \eqref{eq:linear-chained-cascade} and each $L_i$ is diagonalized by $L_{i} = V_{i} \Lambda_{i} V_{i}^{-1}$. For any matrix $B \in \C^{d_i \times d_j}$, the following equality holds for any $i,j \in \set{1,\dots, n}$ with $i\neq j$:
	\begin{align}\label{eq:perturbation-evolution}
	 \sum_{k=0}^{t-1} \Lambda_{i}^{-k} B \Lambda_{j}^{k} 
	= \tilde B - \Lambda_{i}^{-t} \tilde B \Lambda_{j}^{t}
	\end{align}
where $\tilde B \in \C^{d_i \times d_j}$ is the matrix whose $(\ell,m)$-th entry is given by
	\begin{equation}
	[\tilde B]_{\ell,m} = [ B ]_{\ell,m} \left(1 - \frac{\lambda_{j,m} }{\lambda_{i,\ell}}\right)^{-1}
	\end{equation}
\end{lemma}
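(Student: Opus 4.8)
The plan is to prove the identity \eqref{eq:perturbation-evolution} by recognizing that the left-hand side is a finite geometric sum once one works entrywise in the eigenbases. The key observation is that conjugating $B$ by the diagonal matrices $\Lambda_i^{-k}$ and $\Lambda_j^{k}$ acts diagonally on the entries: the $(\ell,m)$-entry of $\Lambda_i^{-k} B \Lambda_j^{k}$ is simply $[B]_{\ell,m}\,(\lambda_{j,m}/\lambda_{i,\ell})^{k}$. So the whole matrix identity decouples into $d_i \cdot d_j$ scalar identities, one for each pair $(\ell,m)$.

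First I would fix $(\ell,m)$ and set $r_{\ell,m} := \lambda_{j,m}/\lambda_{i,\ell}$. Then $\bigl[\sum_{k=0}^{t-1}\Lambda_i^{-k}B\Lambda_j^{k}\bigr]_{\ell,m} = [B]_{\ell,m}\sum_{k=0}^{t-1} r_{\ell,m}^{k}$. Since Condition \ref{eq:chained-cascade-conditions}(ii) guarantees $\sigma(L_i)\cap\sigma(L_j)=\emptyset$ for $i\neq j$, we have $\lambda_{i,\ell}\neq\lambda_{j,m}$, hence $r_{\ell,m}\neq 1$, so the geometric sum formula applies: $\sum_{k=0}^{t-1} r_{\ell,m}^{k} = \frac{1 - r_{\ell,m}^{t}}{1 - r_{\ell,m}}$. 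Multiplying by $[B]_{\ell,m}$ and using the definition $[\tilde B]_{\ell,m} = [B]_{\ell,m}(1 - r_{\ell,m})^{-1}$, this equals $[\tilde B]_{\ell,m}(1 - r_{\ell,m}^{t}) = [\tilde B]_{\ell,m} - [\tilde B]_{\ell,m}\,r_{\ell,m}^{t}$.

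Then I would recognize the right-hand side in matrix form: $[\tilde B]_{\ell,m}\,r_{\ell,m}^{t} = [\tilde B]_{\ell,m}(\lambda_{j,m}/\lambda_{i,\ell})^{t} = \lambda_{i,\ell}^{-t}[\tilde B]_{\ell,m}\lambda_{j,m}^{t} = [\Lambda_i^{-t}\tilde B \Lambda_j^{t}]_{\ell,m}$, using invertibility of each $L_i$ from Condition \ref{eq:chained-cascade-conditions}(i) so that $\Lambda_i^{-t}$ makes sense. Thus entrywise we obtain $\bigl[\sum_{k=0}^{t-1}\Lambda_i^{-k}B\Lambda_j^{k}\bigr]_{\ell,m} = [\tilde B - \Lambda_i^{-t}\tilde B\Lambda_j^{t}]_{\ell,m}$ for every $(\ell,m)$, which is \eqref{eq:perturbation-evolution}.

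This proof is essentially a routine computation; there is no real obstacle. The only point requiring care is the invocation of the disjoint-spectrum hypothesis to ensure $r_{\ell,m}\neq 1$ so that the geometric series is summed by the standard closed form rather than by $t$ — this is precisely the non-resonance condition flagged in the remark following Theorem \ref{thrm:linear-asymptotic-equivalence}, and it is what makes $\tilde B$ well-defined in the first place.
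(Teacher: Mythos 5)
Your proof is correct and follows essentially the same route as the paper's: compute the $(\ell,m)$ entry of $\Lambda_i^{-k}B\Lambda_j^k$ using diagonality, sum the resulting geometric series (valid since the disjoint-spectrum condition gives $\lambda_{j,m}/\lambda_{i,\ell}\neq 1$), and reassemble the tail term as $[\Lambda_i^{-t}\tilde B\Lambda_j^t]_{\ell,m}$. The only cosmetic difference is that you make the invocation of Condition \ref{eq:chained-cascade-conditions}(ii) explicit where the paper leaves it implicit.
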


\begin{proof}
For any matrix $M$ we denote the $(\ell,m)$-th entry as $[M]_{\ell,m}$. The $(\ell,m)$-th entry of \eqref{eq:perturbation-evolution} is given by 
	\begin{align*}
	[\Lambda_{i}^{-k} B \Lambda_{j}^{k}]_{\ell,m}  
	&= \sum_{s=1}^{d_i} [\Lambda_{i}^{-k}]_{\ell,s} [B \Lambda_{j}^{k}]_{s,m} \\
	&= \sum_{s=1}^{d_i} [\Lambda_{i}^{-k}]_{\ell,s} \sum_{u=1}^{d_j} [B]_{s,u}[\Lambda_{j}^{k}]_{u,m} 
	\end{align*}
Since $\Lambda_i$ is diagonal, $[\Lambda_{j}^{k}]_{u,m} = 0$ for $u\neq m$ and $[\Lambda_{j}^{k}]_{m,m} = \lambda_{j,m}^{k}$. This gives
	\begin{align*}
	[\Lambda_{i}^{-k} B \Lambda_{j}^{k}]_{\ell,m}  
	&= \sum_{s=1}^{d_i} [\Lambda_{i}^{-k}]_{\ell,s} [B]_{s,m} \lambda_{j,m}^{k}
	\end{align*}
Since $\Lambda_{i}^{-k}$ is diagonal, we have
	\begin{equation}\label{eq:lambda-b-lambda}
	[\Lambda_{i}^{-k} B \Lambda_{j}^{k}]_{\ell,m} = \lambda_{i,\ell}^{-k} [B]_{\ell,m} \lambda_{j,m}^k 
	= [B]_{\ell,m} \left(\frac{\lambda_{j,m}}{\lambda_{i,\ell}}\right)^k
	\end{equation}
Summing from $k=0,\dots, t-1$, gives
	\begin{align*}
	\sum_{k=0}^{t-1} [\Lambda_{i}^{-k} B \Lambda_{j}^{k}]_{\ell,m} 
	= \sum_{k=0}^{t-1} [B]_{\ell,m} \left(\frac{\lambda_{j,m}}{\lambda_{i,\ell}}\right)^k
	= [B]_{\ell,m} \frac{1 - \left(\frac{\lambda_{j,m}}{\lambda_{i,\ell}}\right)^t}{1- \left(\frac{\lambda_{j,m}}{\lambda_{i,\ell}}\right)}
	= [\tilde B]_{\ell,m} - [\tilde B]_{\ell,m} \left(\frac{\lambda_{j,m}}{\lambda_{i,\ell}}\right)^t
	\end{align*}
Using \eqref{eq:lambda-b-lambda}, but with $B$ and $k$ replaced by $\tilde B$ and $t$, respectively, we get
	\begin{equation*}
	[\tilde B]_{\ell,m} \left(\frac{\lambda_{j,m}}{\lambda_{i,\ell}}\right)^t = [\Lambda_{i}^{-t} \tilde B \Lambda_{j}^{t}]_{\ell,m}.
	\end{equation*}
Therefore,
	\begin{align*}
	\left[\Big( \sum_{k=0}^{t-1} \Lambda_{i}^{-k} B \Lambda_{j}^{k} \Big)\right]_{\ell,m} 
	&= \sum_{k=0}^{t-1} [\Lambda_{i}^{-k} B \Lambda_{j}^{k}]_{\ell,m} \\
	&= [\tilde B]_{\ell,m} - [\tilde B]_{\ell,m} \left(\frac{\lambda_{j,m}}{\lambda_{i,\ell}}\right)^t \\
	&= [\tilde B]_{\ell,m} - [\Lambda_{i}^{-t} \tilde B \Lambda_{j}^{t}]_{\ell,m} \\
	&= [\tilde B  -\Lambda_{i}^{-t} \tilde B \Lambda_{j}^{t}]_{\ell,m} 
	\end{align*}
This is equivalent to \eqref{eq:perturbation-evolution}.
\end{proof}

\begin{lemma}\label{lemma:level-i-solution-with-perturbation-terms}
For each $i=2,\dots,n$, the solution of \eqref{eq:linear-chained-cascade} is 
	\begin{equation}\label{eq:i-th-system-solution-linear-cascade}
	\Pi_i \circ \msf{Lin}^{\circ t}(x_1,\dots,x_n) 
	= \sum_{j=1}^{i} (-1)^{i-j} D_{i,j} L_{j}^{t} \msf{pert}_{j}(x_1,\dots, x_{j}),
	\end{equation}
where
	\begin{align}
	D_{i,i} &= I_{d_i}  & \forall i\in\set{1,\dots,n},\\
	D_{i,j} &= L_{i}^{-1} V_{i} \tilde C_{i,j} V_{j}^{-1} & \forall i\in\set{2,\dots,n}, \forall j\in\set{1,\dots, i-1}, \label{eq:D-i-iminusj}
	\end{align}
and the matrix $\tilde C_{i,j} \in \C^{d_{i} \times d_{j}}$ has elements
	\begin{align}\label{eq:tilde-C-ell-m-elements}
	&[\tilde C_{i,j}]_{\ell,m} = \left[ V_{i}^{-1} C_{i,i-1} D_{i-1,j} V_{j}\right]_{\ell,m} \left(1 - \frac{\lambda_{j,m}}{\lambda_{i,\ell}} \right)^{-1} &
	\forall i\in\set{2,\dots,n}, \forall j\in\set{1,\dots, i-1}.
	\end{align}
The perturbation functions $\msf{pert}_{i} : \C^{d_1} \times \cdots \times \C^{d_i} \to \C^{d_i}$ are multilinear maps defined inductively by
	\begin{align}
	\msf{pert}_{1}(x_1) &= x_1 \\
	\msf{pert}_{i}(x_1,\dots, x_i) &= x_i + \sum_{j=1}^{i-1} (-1)^{i-1-j} D_{i,j} \msf{pert}_{j}(x_1, \dots, x_{j})  & \forall i\in\set{2,\dots,n}. \label{eq:pert-iminusj}
	\end{align}
\end{lemma}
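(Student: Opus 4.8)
The plan is to prove \eqref{eq:i-th-system-solution-linear-cascade} by induction on $i$, unrolling one level of the cascade with Lemma \ref{lemma:general-solution} and evaluating the resulting time sum with Lemma \ref{lemma:perturbation-terms}. The induction anchor is $i=1$: there $\Pi_1\circ\msf{Lin}^{\circ t}(x_1,\dots,x_n)=L_1^t x_1=(-1)^{1-1}D_{1,1}L_1^t\msf{pert}_1(x_1)$ by the conventions $D_{1,1}=I_{d_1}$ and $\msf{pert}_1(x_1)=x_1$, so the claimed formula holds (trivially) at level $1$.

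For the inductive step, fix $i\geq 2$ and assume the formula holds at level $i-1$ for every time, i.e.\ $x_{i-1}(k)=\sum_{j=1}^{i-1}(-1)^{i-1-j}D_{i-1,j}L_j^k\msf{pert}_j(x_1,\dots,x_j)$ for all $k\geq 0$ (for $i=2$ this is just $x_1(k)=L_1^k x_1$). Substituting this into the closed form of Lemma \ref{lemma:general-solution} and writing $L_j^k=V_j\Lambda_j^k V_j^{-1}$, I pull the time sum inside, so that for each $j<i$ a factor $\sum_{k=0}^{t-1}\Lambda_i^{-k}B_{i,j}\Lambda_j^k$ appears with $B_{i,j}:=V_i^{-1}C_{i,i-1}D_{i-1,j}V_j$. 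Since $j<i$, the spectra $\sigma(L_i)$ and $\sigma(L_j)$ are disjoint by Condition \ref{eq:chained-cascade-conditions}, so Lemma \ref{lemma:perturbation-terms} applies and yields $\sum_{k=0}^{t-1}\Lambda_i^{-k}B_{i,j}\Lambda_j^k=\tilde C_{i,j}-\Lambda_i^{-t}\tilde C_{i,j}\Lambda_j^t$, where $\tilde C_{i,j}$ is exactly the matrix of \eqref{eq:tilde-C-ell-m-elements} (its $(\ell,m)$-entry is $[B_{i,j}]_{\ell,m}(1-\lambda_{j,m}/\lambda_{i,\ell})^{-1}$).

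It then remains to simplify the two resulting contributions using the commutation identities $V_i\Lambda_i^{\pm m}=L_i^{\pm m}V_i$ and $\Lambda_j^m V_j^{-1}=V_j^{-1}L_j^m$. The ``constant'' part gives $L_i^{t-1}V_i\tilde C_{i,j}V_j^{-1}=L_i^t\bigl(L_i^{-1}V_i\tilde C_{i,j}V_j^{-1}\bigr)=L_i^t D_{i,j}$, while the ``decaying'' part gives $L_i^{t-1}V_i\Lambda_i^{-t}\tilde C_{i,j}\Lambda_j^t V_j^{-1}=\bigl(L_i^{-1}V_i\tilde C_{i,j}V_j^{-1}\bigr)L_j^t=D_{i,j}L_j^t$. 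Collecting terms, $x_i(t)=L_i^t x_i+\sum_{j=1}^{i-1}(-1)^{i-1-j}L_i^t D_{i,j}\msf{pert}_j(x_1,\dots,x_j)-\sum_{j=1}^{i-1}(-1)^{i-1-j}D_{i,j}L_j^t\msf{pert}_j(x_1,\dots,x_j)$. The first group factors as $L_i^t\bigl(x_i+\sum_{j=1}^{i-1}(-1)^{i-1-j}D_{i,j}\msf{pert}_j(x_1,\dots,x_j)\bigr)=L_i^t\msf{pert}_i(x_1,\dots,x_i)$ by the defining recursion \eqref{eq:pert-iminusj}, and since $-(-1)^{i-1-j}=(-1)^{i-j}$ and $L_i^t\msf{pert}_i=(-1)^{i-i}D_{i,i}L_i^t\msf{pert}_i$, the two groups combine into $\sum_{j=1}^{i}(-1)^{i-j}D_{i,j}L_j^t\msf{pert}_j(x_1,\dots,x_j)$, which is \eqref{eq:i-th-system-solution-linear-cascade}. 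Finally, that each $\msf{pert}_i$ is multilinear (linear in each $x_\ell$) follows immediately by induction from \eqref{eq:pert-iminusj}, since every summand there is a fixed linear operator applied to a lower-order, already multilinear, $\msf{pert}_j$.

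I expect no conceptual obstacle: Lemmas \ref{lemma:general-solution} and \ref{lemma:perturbation-terms} do the real work, and the invertibility of each $L_i$ in Condition \ref{eq:chained-cascade-conditions} is precisely what legitimizes $D_{i,j}$, the inverse powers $\Lambda_i^{-k}$, and the identity $L_i^{t-1}V_i\Lambda_i^{-t}=L_i^{-1}V_i$. The only real risk is bookkeeping --- correctly tracking the sign change $(-1)^{i-1-j}\mapsto(-1)^{i-j}$ between the non-decaying and decaying parts, commuting the $V$'s and $\Lambda$'s past the appropriate powers of $L_i$ and $L_j$, and recognizing that the non-decaying part reassembles exactly into $L_i^t\msf{pert}_i$ through the recursion defining $\msf{pert}_i$.
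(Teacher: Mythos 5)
Your proposal is correct and takes essentially the same route as the paper's proof: induction on the layer index, unrolling one level with Lemma \ref{lemma:general-solution}, evaluating the resulting geometric sum with Lemma \ref{lemma:perturbation-terms} applied to $B_{i,j}=V_i^{-1}C_{i,i-1}D_{i-1,j}V_j$, and regrouping via the commutation identities $L_i^{t-1}V_i\Lambda_i^{-t}=L_i^{-1}V_i$ and $\Lambda_j^t V_j^{-1}=V_j^{-1}L_j^t$. The only cosmetic difference is that the paper spells out $i=2$ as a separate ``seed step'' and then inducts from $i$ to $i+1$, whereas you anchor trivially at $i=1$ and induct from $i-1$ to $i$; the sign bookkeeping and the factorization into $L_i^t\msf{pert}_i$ are identical.
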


\begin{proof}
We prove the result using induction. First note that the solution for $\Pi_1\circ \msf{Lin}^{\circ t}(x_1,\dots,x_n)$ can be written as
	\begin{equation}\label{eq:linear-chained-cascade-x1-solution}
	x_1(t) = \Pi_1\circ \msf{Lin}^{\circ t}(x_1,\dots,x_n) = L_{1}^{t} x_1 \equiv D_{1,1} L_{1}^{t} \msf{pert}_1(x_1).
	\end{equation}

\begin{enumerate}[(I)]
\item[(\textbf{Seed step}):] Consider $x_2(t) = \Pi_2 \circ \msf{Lin}^{\circ t}(x_1,\dots,x_n)$. By lemma \ref{lemma:general-solution}, eq. \eqref{eq:linear-chained-cascade-general-solution}, this is
	\begin{align}
	x_2(t) &= L_{2}^{t} x_2 + L_{2}^{t-1} V_{2} \sum_{k=0}^{t-1} \Lambda_{2}^{-k} V_{2}^{-1} C_{2,1} x_{1}(k) \nonumber \\
	&= L_{2}^{t} x_2 + L_{2}^{t-1} V_{2} \sum_{k=0}^{t-1} \Lambda_{2}^{-k} V_{2}^{-1} C_{2,1} D_{1,1} L_{1}^{k} \msf{pert}_1(x_1)
	\end{align}
where in the second line we have replaced $x_1(k)$ with \eqref{eq:linear-chained-cascade-x1-solution} for $t = k$. Using $L_{1}^{k} = V_{1} \Lambda_{1}^{k}V_{1}^{-1}$ in the second linear gives
	\begin{equation}\label{eq:x2-solution-1}
	x_2(t) = L_{2}^{t} x_2 + L_{2}^{t-1} V_{2} \left(\sum_{k=0}^{t-1} \Lambda_{2}^{-k} V_{2}^{-1} C_{2,1} D_{1,1} V_{1} \Lambda_{1}^{k}\right)V_{1}^{-1} \msf{pert}_1(x_1)
	\end{equation}
By lemma \ref{lemma:perturbation-terms}, \eqref{eq:perturbation-evolution}, with $B \equiv V_{2}^{-1} C_{2,1} D_{1,1} V_{1}$ gives that
	\begin{equation}\label{eq:perturbation-summation-2-1}
	\left(\sum_{k=0}^{t-1} \Lambda_{2}^{-k} V_{2}^{-1} C_{2,1} D_{1,1} V_{1} \Lambda_{1}^{k}\right) = \tilde C_{2,1} - \Lambda_{2}^{-t} \tilde C_{2,1} \Lambda_{1}^{t}
	\end{equation}
where the elements of $\tilde C_{2,1}$ are given as
	\begin{align}\label{eq:tilde-C-2-1}
	&[\tilde C_{2,1}]_{\ell,m} = [ V_{2}^{-1} C_{2,1} D_{1,1} V_{1}  ]_{\ell,m} \Big(1 - \frac{\lambda_{1,m}}{\lambda_{2,\ell}} \Big)^{-1},
	& \forall \ell\in\set{1,\dots,d_2}, \forall m\in\set{1,\dots,d_1}.
	\end{align}
Equation \eqref{eq:tilde-C-2-1} is the same as \eqref{eq:tilde-C-ell-m-elements} for $i=2$.
Using \eqref{eq:perturbation-summation-2-1} in \eqref{eq:x2-solution-1} gives
	\begin{align*}
	x_2(t) &= L_{2}^{t} x_2 + L_{2}^{t-1} V_{2} \left(\tilde C_{2,1} - \Lambda_{2}^{-t} \tilde C_{2,1} \Lambda_{1}^{t}\right)V_{1}^{-1} \msf{pert}_1(x_1) \\
	&= L_{2}^{t}\Big( x_2 + L_{2}^{-1} V_{2} \tilde C_{2,1}V_{1}^{-1} \msf{pert}_1(x_1)\Big) - L_{2}^{t-1} V_{2}\Lambda_{2}^{-t} \tilde C_{2,1} \Lambda_{1}^{t}V_{1}^{-1} \msf{pert}_1(x_1)
	\end{align*}
Since $L_{2}^{t-1} V_{2}\Lambda_{2}^{-t} = L_{2}^{-1}V_{2}$ and $\Lambda_{1}^{t}V_{1}^{-1} = V_{1}^{-1} L_{1}^{t}$, we get
	\begin{equation}
	x_2(t) =  L_{2}^{t}\Big( x_2 + L_{2}^{-1} V_{2} \tilde C_{2,1}V_{1}^{-1} \msf{pert}_1(x_1)\Big) - L_{2}^{-1}V_{2} \tilde C_{2,1} V_{1}^{-1} L_{1}^{t} \msf{pert}_1(x_1).
	\end{equation}
Defining $D_{2,1}$ as
	\begin{equation}\label{eq:D-2-1}
	D_{2,1} = L_{2}^{-1} V_{2} \tilde C_{2,1}V_{1}^{-1}
	\end{equation}
and $\msf{pert}_{2} : \C^{d_{1}}\times \C^{d_{2}} \to \C^{d_{2}}$ as
	\begin{equation}
	\msf{pert}_{2}(x_1,x_2) = x_2 + D_{2,1} \msf{pert}_1(x_1)
	\end{equation}
gives
	\begin{align*}
	x_{2}(t) &= L_{2}^{t} \msf{pert}_{2}(x_1,x_2) - D_{2,1} L_1^{t} \msf{pert}_{1}(x_1) \\
	&= (-1)^{0} D_{2,2} L_{2}^{t} \msf{pert}_{2}(x_1,x_2) - D_{2,1} L_1^{t} \msf{pert}_{1}(x_1)
	\end{align*}
sine $D_{2,2} = I_{d_2}$ be definition. Finally,
	\begin{equation}
	x_{2}(t) = \sum_{s=0}^{1} (-1)^{s} D_{2,2-s} L_{2-s}^{t}\msf{pert}_{2-s}(x_1,\dots, x_{2-s})
	\end{equation}
Using the change of variables $j = 2 - s$, we have that
	\begin{equation}\label{eq:x2-summation-solution}
	x_{2}(t) = \sum_{j=1}^{2} (-1)^{2-j} D_{2,j} L_{j}^{t}\msf{pert}_{j}(x_1,\dots, x_{j})
	\end{equation}
Equations \eqref{eq:D-2-1} - \eqref{eq:x2-summation-solution} are equivalent to equations \eqref{eq:i-th-system-solution-linear-cascade}, \eqref{eq:D-i-iminusj}, and \eqref{eq:pert-iminusj}, for $j=2$. 

\item[(\textbf{Induction step}):]
Assume \eqref{eq:i-th-system-solution-linear-cascade} --  \eqref{eq:pert-iminusj} hold for for all $j \leq i$ where $i\in\set{2,\dots, n-1}$. We show they hold for $i+1$ as well. 

Write $x_{i+1}(t) = \Pi_{i+1}\msf{Lin}^{\circ t}(x_1,\dots,x_n)$. By lemma \ref{lemma:general-solution}, eq. \eqref{eq:linear-chained-cascade-general-solution}, the solution is
	\begin{align*}
	x_{i+1}(t) &= L_{i+1}^{t} x_{i+1} + L_{i+1}^{t-1} V_{i+1} \sum_{k=0}^{t-1} \Lambda_{i+1}^{-k} V_{i+1}^{-1} C_{i+1,i} x_{i}(k) .
	\end{align*}
By the induction hypothesis
	\begin{equation}
	x_{i}(k) = \sum_{j=1}^{i} (-1)^{i-j} D_{i,j} L_{j}^{k} \msf{pert}_{j}(x_1,\dots,x_{j}).
	\end{equation}
which gives that $x_{i+1}(t)$ is (after interchanging the finite sums)
	\begin{align*}
	x_{i+1}(t) &= L_{i+1}^{t} x_{i+1} + \sum_{j=1}^{i} (-1)^{i-j} L_{i+1}^{t-1} V_{i+1} \sum_{k=0}^{t-1} \Lambda_{i+1}^{-k} V_{i+1}^{-1} C_{i+1,i}  D_{i,j} L_{j}^{k} \msf{pert}_{j}(x_1,\dots,x_{j}).
	\end{align*}
Since $L_{j}$ is diagonalizable, we substitute $V_{j} \Lambda_{j}^{k} V_{j}^{-1}$ for $L_{j}^{k}$ in the above equation to get
	\begin{equation}\label{eq:xi-induction-step-sum}
	\begin{aligned}
	x_{i+1}(t) &= L_{i+1}^{t} x_{i+1} \\
	&\quad+ \sum_{j=1}^{i} (-1)^{i-j} L_{i+1}^{t-1} V_{i+1} 
	\left(\sum_{k=0}^{t-1} \Lambda_{i+1}^{-k} V_{i+1}^{-1} C_{i+1,i}  D_{i,j} V_{j} \Lambda_{j}^{k} \right)V_{j}^{-1}\msf{pert}_{j}(x_1,\dots,x_{j}) .
	\end{aligned} 
	\end{equation}
By lemma \ref{lemma:perturbation-terms}, eq.\ \eqref{eq:perturbation-evolution}, with $B \equiv V_{i+1}^{-1} C_{i+1,i}  D_{i,j} V_{j}$ we have
	\begin{equation}\label{eq:sum-lambda-k-i+1-lambdak}
	\sum_{k=0}^{t-1} \Lambda_{i+1}^{-k} V_{i+1}^{-1} C_{i+1,i}  D_{i,j} V_{j} \Lambda_{j}^{k}
	= \tilde C_{i+1,j} - \Lambda_{i+1}^{-t}\tilde C_{i+1,j} \Lambda_{j}^{t},
	\end{equation}
where for $j \in \set{1,\dots, i}$ the matrix $\tilde C_{i+1,j} \in \C^{d_{i+1}\times d_{j}} $ has elements
	\begin{align}\label{eq:tilde-C-induction-step}
	\left[\tilde C_{i+1,j}\right]_{\ell,m} = \left[V_{i+1}^{-1} C_{i+1,i}  D_{i,j} V_{j}\right]_{\ell,m} \left(1 - \frac{\lambda_{j,m}}{\lambda_{i+1,\ell}}\right)^{-1}.
	\end{align}
Equation \eqref{eq:tilde-C-induction-step} is \eqref{eq:tilde-C-ell-m-elements} for $i+1$. Plugging \eqref{eq:sum-lambda-k-i+1-lambdak} into \eqref{eq:xi-induction-step-sum} gives
	\begin{align*}
	x_{i+1}(t) 
	&= L_{i+1}^{t} x_{i+1} \\
	&\quad+ \sum_{j=1}^{i} (-1)^{i-j} L_{i+1}^{t-1} V_{i+1} 
	\left(\tilde C_{i+1,j} - \Lambda_{i+1}^{-t}\tilde C_{i+1,j} \Lambda_{j}^{t} \right)V_{j}^{-1}\msf{pert}_{j}(x_1,\dots,x_{j}) \\
	&= L_{i+1}^{t} x_{i+1} + \sum_{j=1}^{i} (-1)^{i-j} L_{i+1}^{t-1} V_{i+1} 
	\tilde C_{i+1,j} V_{j}^{-1}\msf{pert}_{j}(x_1,\dots,x_{j})\\
	&\quad+ \sum_{j=1}^{i} (-1)^{i+1-j} L_{i+1}^{t-1} V_{i+1} 
	 \Lambda_{i+1}^{-t}\tilde C_{i+1,j} \Lambda_{j}^{t} V_{j}^{-1}\msf{pert}_{j}(x_1,\dots,x_{j})  \\
	&= L_{i+1}^{t} \left[ x_{i+1} + \sum_{j=1}^{i} (-1)^{i-j} L_{i+1}^{-1} V_{i+1} 
	\tilde C_{i+1,j} V_{j}^{-1}\msf{pert}_{j}(x_1,\dots,x_{j})\right]\\
	&\quad+ \sum_{j=1}^{i} (-1)^{i+1-j} L_{i+1}^{t-1} V_{i+1} 
	 \Lambda_{i+1}^{-t}\tilde C_{i+1,j} \Lambda_{j}^{t} V_{j}^{-1}\msf{pert}_{j}(x_1,\dots,x_{j}).
	\end{align*} 
Since $L_{i+1}^{t-1} V_{i+1} \Lambda_{i+1}^{-t} = L_{i+1}^{-1} V_{i+1}$ and $\Lambda_{j}^{t} V_{j}^{-1} = V_{j}^{-1} L_{j}^{t}$, then
	\begin{align*}
	x_{i+1}(t) 
	&= L_{i+1}^{t} \left[ x_{i+1} + \sum_{j=1}^{i} (-1)^{i-j} \left(L_{i+1}^{-1} V_{i+1} 
	\tilde C_{i+1,j} V_{j}^{-1}\right) \msf{pert}_{j}(x_1,\dots,x_{j})\right]\\
	&\quad+ \sum_{j=1}^{i} (-1)^{i+1-j} \left(L_{i+1}^{-1} V_{i+1} \tilde C_{i+1,j} V_{j}^{-1}\right) L_{j}^{t}\msf{pert}_{j}(x_1,\dots,x_{j}).
	\end{align*} 
For $j=1,\dots, i$, define
	\begin{equation}
	D_{i+1,j} = L_{i+1}^{-1} V_{i+1}  \tilde C_{i+1,j} V_{j}^{-1} 
	\end{equation}
as in eq.\ \eqref{eq:D-i-iminusj} and $\msf{pert}_{i+1} : \C^{d_1}\times \cdots \times \C^{d_{i+1}} \to \C^{d_{i+1}}$ as
	\begin{align*}
	\msf{pert}_{i+1}(x_1,\dots, x_n) 
	&= x_{i+1} + \sum_{j=1}^{i} (-1)^{i-j} L_{i+1}^{-1} V_{i+1} 
		\tilde C_{i+1,j} V_{j}^{-1}\msf{pert}_{j}(x_1,\dots,x_{j}) \\
	&= x_{i+1} + \sum_{j=1}^{i} (-1)^{i-j} D_{i+1,j}\msf{pert}_{j}(x_1,\dots,x_{j}) 
	\end{align*}
as in eq.\ \eqref{eq:pert-iminusj} with the substitution $i \mapsto i+1$. Substituting these definitions into the expression for the solution $x_{i+1}(t)$ and defining $D_{i+1,i+1} = I_{d_{i+1}}$, we have
	\begin{align*}
	x_{i+1}(t) 
	&= L_{i+1}^{t}\msf{pert}_{i+1}(x_1,\dots,x_{i+1})
	+ \sum_{j=1}^{i} (-1)^{i+1-j} D_{i+1,j} L_{j}^{t} \msf{pert}_{j}(x_1,\dots,x_{j}) \\
	&= (-1)^0 D_{i+1,i+1} L_{i+1}^{t}\msf{pert}_{i+1}(x_1,\dots,x_{i+1})
	+ \sum_{j=1}^{i} (-1)^{i+1-j} D_{i+1,j} L_{j}^{t} \msf{pert}_{j}(x_1,\dots,x_{j}) \\
	&= \sum_{j=1}^{i+1} (-1)^{i+1-j} D_{i+1,j} L_{j}^{t} \msf{pert}_{j}(x_1,\dots,x_{j}) .
	\end{align*}
Comparing with \eqref{eq:i-th-system-solution-linear-cascade} with the substitution $i\mapsto i+1$, we see that the induction is complete.
\end{enumerate}
This completes the proof.
\end{proof}

\begin{corollary}\label{lem:chained-linear-cascade-asymptotic-proportionality}
Assume that Condition \ref{eq:chained-cascade-conditions} holds for \eqref{eq:linear-chained-cascade}. Then for all $i\in\set{2,\dots,n}$ and $t \in \N$,
	\begin{align}\label{eq:level-i-chained-linear-solution-bound}
	\norm*{\Pi_i \circ \msf{Lin}^{\circ t}(x_1, \dots, x_n) - L_i^{t}( \msf{pert}_{i}(x_1, \dots, x_i) ) }_{C^{d_i}} \leq \sum_{j=1}^{i-1} \norm{D_{i,j}}\norm{L_{j}^t \msf{pert}_{j}(x_1,\dots,x_{j})}_{\C^{d_{j}}}
	\end{align}
where $D_{i,j}$ and $\msf{pert}_{j}$ are given by \eqref{eq:D-i-iminusj} and \eqref{eq:pert-iminusj}. Furthermore, 
	\begin{align}\label{eq:lemma-relative-error-epsilon-limit-bound}
	\lim_{t\to\infty} \frac{\norm[\big]{\Pi_i \circ \msf{Lin}^{\circ t}(x_1,\dots, x_n) - L_i^{t} (\msf{pert}_{i}(x_1, \dots, x_i) ) }}{\norm{L_i}^t} = 0.
	\end{align}
\end{corollary}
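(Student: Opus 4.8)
The plan is to obtain everything from the closed form for $\Pi_i\circ\msf{Lin}^{\circ t}$ already established in Lemma \ref{lemma:level-i-solution-with-perturbation-terms}, together with the triangle inequality and the hierarchy of norms in Condition \ref{eq:chained-cascade-conditions}. First I would recall \eqref{eq:i-th-system-solution-linear-cascade}, namely
\[
\Pi_i \circ \msf{Lin}^{\circ t}(x_1,\dots,x_n) = \sum_{j=1}^{i} (-1)^{i-j} D_{i,j} L_j^t \,\msf{pert}_j(x_1,\dots,x_j),
\]
and observe that the $j=i$ summand is $(-1)^0 D_{i,i} L_i^t\,\msf{pert}_i(x_1,\dots,x_i) = L_i^t\,\msf{pert}_i(x_1,\dots,x_i)$, since $D_{i,i}=I_{d_i}$. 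Subtracting this term isolates exactly the quantity of interest:
\[
\Pi_i \circ \msf{Lin}^{\circ t}(x_1,\dots,x_n) - L_i^t\,\msf{pert}_i(x_1,\dots,x_i) = \sum_{j=1}^{i-1} (-1)^{i-j} D_{i,j} L_j^t\,\msf{pert}_j(x_1,\dots,x_j).
\]

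For the first assertion \eqref{eq:level-i-chained-linear-solution-bound}, I would take the $\C^{d_i}$-norm of both sides of this identity, pass the norm through the finite sum with the triangle inequality, discard the scalar factors $(-1)^{i-j}$, and apply submultiplicativity of the induced norm, $\norm{D_{i,j} v}_{\C^{d_i}} \le \norm{D_{i,j}}\,\norm{v}_{\C^{d_j}}$, to each summand with $v = L_j^t\,\msf{pert}_j(x_1,\dots,x_j)$. This yields directly the claimed bound $\sum_{j=1}^{i-1}\norm{D_{i,j}}\,\norm{L_j^t\,\msf{pert}_j(x_1,\dots,x_j)}_{\C^{d_j}}$.

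For the second assertion \eqref{eq:lemma-relative-error-epsilon-limit-bound}, I would divide the bound just obtained by $\norm{L_i}^t$ — which is legitimate since $L_i$ is invertible, so $\norm{L_i}>0$ — and further estimate $\norm{L_j^t\,\msf{pert}_j(x_1,\dots,x_j)} \le \norm{L_j}^t\,\norm{\msf{pert}_j(x_1,\dots,x_j)}$, obtaining
\[
\frac{\norm[\big]{\Pi_i \circ \msf{Lin}^{\circ t}(x_1,\dots,x_n) - L_i^t\,\msf{pert}_i(x_1,\dots,x_i)}}{\norm{L_i}^t} \le \sum_{j=1}^{i-1}\norm{D_{i,j}}\,\norm{\msf{pert}_j(x_1,\dots,x_j)}\left(\frac{\norm{L_j}}{\norm{L_i}}\right)^t.
\]
For each $j \le i-1$ the strict inequality $\norm{L_j} < \norm{L_i}$ from Condition \ref{eq:chained-cascade-conditions}\eqref{cond:hierarchy-of-norms} makes the geometric factor $(\norm{L_j}/\norm{L_i})^t$ tend to $0$ as $t\to\infty$; since the right-hand side is a finite sum of such terms with $t$-independent coefficients, it converges to $0$, and the squeeze theorem gives the limit. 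The argument is essentially routine; the only points requiring a moment's care are the bookkeeping that the $j=i$ term of the Lemma's formula is precisely the term being subtracted, and the fact that it is the \emph{strict} version of the hierarchy of norms that forces the geometric ratios to decay. I do not anticipate any genuine obstacle, as the statement is a direct consequence of Lemma \ref{lemma:level-i-solution-with-perturbation-terms}.
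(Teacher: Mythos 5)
Your proposal is correct and follows precisely the same route as the paper's proof: isolate the $j=i$ term from the closed form in Lemma~\ref{lemma:level-i-solution-with-perturbation-terms} using $D_{i,i}=I_{d_i}$, apply the triangle inequality and operator-norm submultiplicativity for the first bound, then divide by $\norm{L_i}^t$ and invoke the strict hierarchy $\norm{L_j}<\norm{L_i}$ for $j<i$ to get the geometric decay. The paper states this argument in one terse sentence; your version simply makes each step explicit.
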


\begin{proof}
Inequality \eqref{eq:level-i-chained-linear-solution-bound} follows directly from lemma \ref{lemma:level-i-solution-with-perturbation-terms}, eq.\ \eqref{eq:i-th-system-solution-linear-cascade} and the fact that $D_{i,i} = I_{d_{i}}$. Equation \eqref{eq:lemma-relative-error-epsilon-limit-bound} follows from the conditions $\norm{L_{j}} < \norm{L_{i}}$ for $j=1,\dots, i-1$.
\end{proof}

\subsection{Perturbation of principal eigenfunctions: nominal, linear system}\label{sec:principal-eigenfunction-perturbation-proof}
We now prove theorem \ref{thrm:perturbation-principal-eigenfunctions}. It is a straightforward application of theorem \ref{thrm:linear-asymptotic-equivalence}.

\begin{proof}[Proof of theorem \ref{thrm:perturbation-principal-eigenfunctions}]
We first show that for $i \geq 1$ and $t \geq 0$, that \eqref{eq:thrm-linear-principal-eigenfunction-proportionality} holds.

By definition
	\begin{align*}
	\mc U_{\msf{Lin}}^{\circ t} \psi_{(0,\dots,0,s_i,0,\dots,0)}(x_1,\dots, x_n) 
	&= \psi_{(0,\dots,0,s_i,0,\dots,0)}(\msf{Lin}^{\circ t}(x_1,\dots, x_n)) \nonumber \\
	&= \psi_{i,s_i}(\Pi_i \circ \msf{Lin}^{\circ t}(x_1,\dots, x_n)) \\
	&= \psi_{i,s_i}(\Pi_i\circ\msf{Nom}^{\circ t}(\msf{pert}(x_1,\dots, x_n))) \\
	&\quad+ \psi_{i,s_i}(\Pi_i \circ \msf{Lin}^{\circ t}(x_1,\dots, x_n) - \Pi_i \circ \msf{Nom}^{\circ t}(\msf{pert}(x_1,\dots, x_n))) \\
	&= (\mc U_{\msf{Nom}}^{\circ t}(\psi_{i,s_i}\circ \Pi_i))\circ \msf{pert}(x_1,\dots, x_n) \\
	&\quad+ \psi_{i,s_i}(\Pi_i \circ \msf{Lin}^{\circ t}(x_1,\dots, x_n) - \Pi_i \circ \msf{Nom}^{\circ t}(\msf{pert}(x_1,\dots, x_n))) \\
	&= (\mc U_{\msf{Nom}}^{\circ t}\psi_{(0,\dots,0,s_i,0,\dots,0)})\circ\msf{pert}(x_1,\dots, x_n) \\
	&\quad+ \psi_{i,s_i}(\Pi_i \circ \msf{Lin}^{\circ t}(x_1,\dots, x_n) - \Pi_i \circ \msf{Nom}^{\circ t}(\msf{pert}(x_1,\dots, x_n))) 
	\end{align*}
Therefore,
	\begin{align}
	&\abs*{ \mc U_{\msf{Lin}}^{\circ t} \psi_{(0,\dots,0,s_i,0,\dots,0)}(x_1,\dots, x_n) - \mc U_{\msf{Nom}}^{\circ t}\psi_{(0,\dots,0,s_i,0,\dots,0)}(\msf{pert}(x_1,\dots, x_n))} \nonumber \\
	&\qquad\leq \norm{\psi_{i,s_i}} \norm*{\Pi_i \circ \msf{Lin}^{\circ t}(x_1,\dots, x_n) - \Pi_i\circ \msf{Nom}^{\circ t}(\msf{pert}(x_1,\dots, x_n)) } \label{eq:linear-principal-eigenfunction-bound}
	\end{align}
By theorem \ref{thrm:linear-asymptotic-equivalence}, eq.\ \eqref{eq:thrm-asymptotic-proportionality}, for all $t \geq 0$ and $i\geq 1$,
	\begin{align}
	\norm*{\Pi_i \circ \msf{Lin}^{\circ t}(x_1,\dots, x_n) - \Pi_i \circ \msf{Nom}^{\circ t}(\msf{pert}(x_1,\dots, x_n)) } \leq 
	\sum_{j=1}^{i-1} \norm{D_{i,j}}\norm{L_{j}^t \msf{pert}_{j}(x_1,\dots,x_{j})}
	\end{align}
This estimate along with \eqref{eq:linear-principal-eigenfunction-bound} gives \eqref{eq:thrm-linear-principal-eigenfunction-proportionality}. 

By theorem \ref{thrm:linear-asymptotic-equivalence}, eq.\ \eqref{eq:thrm-asymptotic-relative-error}, for all $i\in\set{1,\dots, n}$ and any $\eps > 0$,
	\begin{align}
	\frac{\norm*{\Pi_i \circ \msf{Lin}^{\circ t}(x_1,\dots, x_n) - \Pi_i \circ \msf{Nom}^{\circ t}(\msf{pert}(x_1,\dots, x_n)) }}{\norm{L_i}^{t}} \leq \eps 
	\end{align}
for all $t$ large enough. This is equivalent to \eqref{eq:thrm-linear-principal-eigenfunction-0-asymptotic-relative-error}.
\end{proof}

\subsection{Asymptotic equivalence for nonlinear, chained cascade}\label{sec:nonlinear-chained-cascade-asymptotic-equivalence-proof}

We now prove Theorem \ref{thrm:nonlinear-chained-cascade-asymptotic-equivalence}. It is a straight-forward application of Theorem  \ref{thrm:linear-asymptotic-equivalence} and that the topological conjugacy is a homeomorphism.

\begin{proof}[Proof of Theorem \ref{thrm:nonlinear-chained-cascade-asymptotic-equivalence}]
Fix $\eps > 0$ and $\vec y = (y_1,\dots, y_n) \in \C^{d_1}\times\cdots\times \C^{d_n}$. Define $\vec x = \tau^{-1}(\vec y)$. Denote by $\overline{B_{i}}$ the closed unit ball of radius centered at the origin in $\C^{d_i}$. Condition \ref{eq:chained-cascade-conditions} and Theorem \ref{thrm:linear-asymptotic-equivalence}, eq.\ \eqref{eq:thrm-asymptotic-relative-error} guarantee that $\msf{Lin}^{\circ t}(\vec x)$ and $\msf{Nom}^{\circ t}(\msf{pert}(\vec x))$ are in the compact set $\overline{B_{1}} \times \cdots \times \overline{B_{n}}$ for all $t$ large enough.

Since $\tau$ is a continuous, it is uniformly continuous on $\overline{B_{1}} \times \cdots \times \overline{B_{r}}$. Let $\delta > 0$ be such that if $\vec{x}, \vec{x}' \in \overline{B_1} \times \cdots \times \overline{B_n}$ and $\norm{\vec{x} - \vec{x}'} < \delta$, then $\norm{\tau(\vec{x}) - \tau(\vec{x}')} < \eps$.

By corollary \ref{cor:asymptotic-equivalence-cascade-system-absolute-error}, there is a $T \in \N$ such that $t \geq T$ implies
	\begin{align}
	\norm{\msf{Lin}^{\circ t}(\vec x) - \msf{Nom}^{\circ t}(\msf{pert}(\vec x)) }_{\times} < \delta.
	\end{align}
The uniform continuity of $\tau$ implies that 
	\begin{align}\label{eq:tau-forward-image-asymptotic-equivalence}
	\norm{\tau \circ \msf{Lin}^{\circ t}(\vec x) - \tau\circ\msf{Nom}^{\circ t}(\msf{pert}(\vec x)) }_{\times} &< \eps, &(\forall t \geq T).
	\end{align}
Now, since $\tau$ is a topological conjugacy, $\msf{Lin}^{\circ t} = \tau^{-1}\circ \msf{NonLin}^{\circ t} \circ \tau$. Plugging this in into \eqref{eq:tau-forward-image-asymptotic-equivalence} gives, for all $t \geq T$,
	\begin{align*}
	\eps &> \norm{\tau \circ (\tau^{-1}\circ \msf{NonLin}^{\circ t} \circ \tau)(\vec x) - \tau\circ\msf{Nom}^{\circ t}(\msf{pert}(\vec x)) }_{\times} \\
	&= \norm{(\msf{NonLin}^{\circ t}(\tau(\vec x)) - \tau\circ\msf{Nom}^{\circ t}(\msf{pert}(\vec x)) }_{\times} \\
	&= \norm{(\msf{NonLin}^{\circ t}(\tau(\vec x)) - (\tau\circ\msf{Nom}^{\circ t}\circ \tau^{-1})\circ \tau \circ(\msf{pert}(\vec x)) }_{\times} \\
	&= \norm{(\msf{NonLin}^{\circ t})(\tau(\tau^{-1}(\vec y))) - (\tau\circ\msf{Nom}^{\circ t}\circ \tau^{-1})\circ \tau \circ(\msf{pert}(\tau^{-1}(\vec y))) }_{\times} \\
	&= \norm{\msf{NonLin}^{\circ t}(\vec y) - (\tau\circ\msf{Nom}^{\circ t}\circ \tau^{-1})\circ (\tau \circ \msf{pert} \circ \tau^{-1})(\vec y) }_{\times} .
	\end{align*}
Therefore,
	\begin{align*}
	\lim_{t\to\infty} \norm{\msf{NonLin}^{\circ t}(\vec y) - (\tau\circ\msf{Nom}^{\circ t}\circ \tau^{-1})\circ (\tau \circ \msf{pert} \circ \tau^{-1})(\vec y) }_{\times} = 0.
	\end{align*}
This completes the proof.
\end{proof}

\subsection{Perturbation of principal eigenfunctions: nominal, nonlinear cascade}\label{sec:perturbation-nonlinear-principal-eigenfunctions-proof}

To save space in the following proof, we will write $\psi_{(0,\cdots,0,s_i,0,\dots,0)}$ as $\psi_{s_i\hat e_{n,i}}$, where $\hat e_{n,i}$ is the $i$-th canonical basis vector of length $n$. 

\begin{proof}[Proof of Theorem \ref{thrm:peturbation-nonlinear-principal-eigenfunctions}]
Fix $\vec y = (y_1,\dots, y_n) \in \C^{d_1}\times\cdots\times\C^{d_n}$ and let $\vec x = \tau^{-1}(\vec y)$. The topological conjugacy satisfies
	\begin{equation}
	\msf{Lin}^{\circ t}(\vec x) = (\tau^{-1} \circ \msf{NonLin}^{\circ t} \circ \tau)(\vec x).
	\end{equation}
Using this relation, we get
	\begin{align*}
	\mc U_{\msf{Lin}}^{\circ t} \psi_{s_i\hat e_{n,i}}(\vec x)
	&= \psi_{s_i\hat e_{n,i}}(\msf{Lin}^{\circ t}\vec x) \\
	&= \psi_{s_i\hat e_{n,i}}((\tau^{-1} \circ \msf{NonLin}^{\circ t} \circ \tau)(\vec x)) \\
	&= (\psi_{s_i\hat e_{n,i}}\circ \tau^{-1})(\msf{NonLin}^{\circ t}(\tau(\vec x))) \\
	&= \mc U_{\msf{NonLin}}^{\circ t}(\psi_{s_i\hat e_{n,i}}\circ \tau^{-1})(\vec y).
	\end{align*}
On the other hand,
	\begin{align*}
	\mc U_{\msf{Nom}}^{\circ t} \psi_{s_i\hat e_{n,i}}(\msf{pert}(\vec x)) 
	&= \psi_{s_i\hat e_{n,i}}(\msf{Nom}^{\circ t}(\msf{pert}(\vec x))) \\
	&= \psi_{s_i\hat e_{n,i}}(\tau^{-1}\circ \tau \circ \msf{Nom}^{\circ t}\circ \tau^{-1} \circ \tau(\msf{pert}(\vec x))) \\
	&= (\psi_{s_i\hat e_{n,i}}\circ\tau^{-1})((\tau \circ \msf{Nom}^{\circ t}\circ \tau^{-1}) \circ \tau(\msf{pert}(\vec x))) \\
	&= (\psi_{s_i\hat e_{n,i}}\circ\tau^{-1})((\tau \circ \msf{Nom}^{\circ t}\circ \tau^{-1}) (\tau \circ \msf{pert} \circ \tau^{-1})(\vec y)) \\
	&= \mc U_{\tau \circ \msf{Nom}^{\circ t}\circ \tau^{-1}}^{\circ t}(\psi_{s_i\hat e_{n,i}}\circ\tau^{-1})((\tau \circ \msf{pert} \circ \tau^{-1})(\vec y)) .
	\end{align*}
Combining these two expression, we have
	\begin{align*}
	&\abs*{\mc U_{\msf{NonLin}}^{\circ t}(\psi_{s_i\hat e_{n,i}}\circ \tau^{-1})(\vec y) - \mc U_{\tau \circ \msf{Nom}^{\circ t}\circ \tau^{-1}}^{\circ t}(\psi_{s_i\hat e_{n,i}}\circ\tau^{-1})((\tau \circ \msf{pert} \circ \tau^{-1})(\vec y))} \\
	&\qquad\qquad= \abs*{\mc U_{\msf{Lin}}^{\circ t} \psi_{s_i\hat e_{n,i}}(\vec x)  -  \mc U_{\msf{Nom}}^{\circ t} \psi_{s_i\hat e_{n,i}}(\msf{pert}(\vec x)) }.
	\end{align*}
Theorem \ref{thrm:perturbation-principal-eigenfunctions}, eq.\ \eqref{eq:thrm-linear-principal-eigenfunction-0-asymptotic-relative-error}, implies
	\begin{align*}
	0 &= 
	\lim_{t\to\infty} \frac{\abs*{\mc U_{\msf{Lin}}^{\circ t} \psi_{s_i\hat e_{n,i}}(\vec x)  -  \mc U_{\msf{Nom}}^{\circ t} \psi_{s_i\hat e_{n,i}}(\msf{pert}(\vec x)) } }{\norm{L_i}^t} \\
	&=\lim_{t\to\infty} \frac{\abs*{\mc U_{\msf{NonLin}}^{\circ t}(\psi_{s_i\hat e_{n,i}}\circ \tau^{-1})(\vec y) - \mc U_{\tau \circ \msf{Nom}^{\circ t}\circ \tau^{-1}}^{\circ t}(\psi_{s_i\hat e_{n,i}}\circ\tau^{-1})((\tau \circ \msf{pert} \circ \tau^{-1})(\vec y))} }{\norm{L_i}^t} .
	\end{align*}
\end{proof}

\section{Conclusions}\label{sec:conclusions}
In this paper, we have analyzed the Koopman spectrum of cascaded dynamical systems; systems formed by wiring component subsystems together in a lower block triangular form. We show the existence of a perturbation function that maps initial conditions to initial conditions such that the evolution of the cascaded system from an initial condition is asymptotically equivalent to the evolution due to each component subsystem (decoupled from the others) if the initial conditions for the component subsystem are given by the perturbation function applied to the cascaded system's initial condition. The rate of convergence of the orbits in each subsystem is faster than the the decay rate of the decoupled subsystem to its fixed point. This is captured in our results by saying that each subsystem has zero asymptotic relative error. We also show a bound on the distance between the trajectories that hold for all time. From these results, it follows that the Koopman principal eigenvalues of each subsystem are also Koopman eigenvalues of the cascaded system. The principal eigenfunctions for a subsystem become eigenfunctions for the cascaded system when composed with the perturbation function. It also follows these results hold for a nonlinear cascade if it is topologically conjugate to a linear cascade for which the results hold.

These results are useful in the analysis of large interconnected systems. Often, in order to analyze these systems efficiently, a decomposition into a lower block diagonal form (cascade structure) or block diagonal form must be performed. Various techniques have been proposed to do this, with more research begin currently done. Once in this structure, the results of this paper allow a further analysis of the system without having to simulate it. The results on the principal Koopman eigenfunctions tell, \textit{a priori}, how any observable of interest on the system would behave under the dynamics. Such observables would only need to be expanded into the principal eigenfunctions and their products, whose eigenvalues are given by products of the principal eigenvalues.

\section*{Acknowledgments}

This research was partially funded under a subcontract from HRL Laboratories, LLC under DARPA contract N66001-16-C-4053 and additionally funded by the DARPA Contract HR0011-16-C-0116.

The views expressed are those of the authors and do not reflect the official policy or position of the Department of Defense or the U.S. Government. Distribution Statement `A': Approved for Public Release, Distribution Unlimited.



\end{document}